\documentclass[a4paper,10pt]{amsart}
\usepackage[utf8]{inputenc}
\usepackage{amsmath,amssymb,amsthm}
\usepackage{xcolor}
\usepackage{enumerate}
\usepackage{tikz}
\usepackage{tikz-cd}
\usepackage[all]{xy}
\usepackage{float}
\usepackage{soul}
\usetikzlibrary{decorations.pathreplacing}

\usepackage{listings}

\usepackage{hyperref}
\hypersetup{hidelinks}

\usepackage[noabbrev,capitalise,nameinlink]{cleveref}
\crefname{subsection}{Subsection}{Subsections}

\usepackage{thmtools}

\usepackage{stmaryrd}

\theoremstyle{plain}

\newtheorem{thm}{Theorem}[section]
\newtheorem{corollary}[thm]{Corollary}
\newtheorem{lemma}[thm]{Lemma}
\newtheorem{question}[thm]{Question}
\newtheorem{proposition}[thm]{Proposition}

\newtheorem*{conjecture*}{Conjecture}

\theoremstyle{definition}

\newtheorem{definition}[thm]{Definition}

\DeclareMathOperator{\soc}{soc}
\DeclareMathOperator{\aut}{Aut}

\title{Sectionally indecomposable groups}

\author{Andrea Lucchini}
\address{Andrea Lucchini. University of Padova (Italy), Dipartimento di Matematica ``Tullio Levi Civita''. ORCID: https://orcid.org/0000-0002-2134-4991}
\email{lucchini@math.unipd.it}

\author{Nowras Otmen} 
\address{Nowras Otmen. University of Padova (Italy), Dipartimento di Matematica ``Tullio Levi Civita''. ORCID: https://orcid.org/0009-0009-8092-1689}
\email{nowras.naufel@math.unipd.it}

\date{\today}

\begin{document}

\maketitle

\begin{abstract}
We introduce the notion of sectional indecomposability and study it for finite groups: a group $H$ is sectionally indecomposable if, whenever $H$ is a section of a direct product $A \times B$, then $H$ is already a section of $A$ or of $B$. We show that the study of sectionally indecomposable finite groups reduces to the monolithic case. Our main result is a complete characterisation of sectional indecomposability for monolithic primitive groups: such a group $G$ with $N = \mathrm{soc}(G)$ is sectionally indecomposable if and only if either $N$ is non-abelian, or $N$ is a $p$-group and  $O_{p'}(G/N) \neq 1$. The proof relies on the introduction of the notion of an $H$-Frattini module and on the theory of the universal $p$-Frattini cover, together with a result of Griess--Schmid. As a corollary, every monolithic primitive solvable group is sectionally indecomposable. We also discuss the non-primitive case, which appears significantly harder, and highlight open questions concerning monolithic $p$-groups.
\end{abstract}

\section{Introduction}

There exist various notions of `indecomposability' in the theory of groups. Two illustrations of this are as follows. 

Let $G$ be a group. Then $G$ is \emph{directly indecomposable} if it is not isomorphic to a non-trivial direct product of groups. One way to rephrase this is that, if $G \cong H \times K$, then, without loss of generality, $G \cong H$ and $K$ is trivial. One can also speak of free indecomposability: we say that $G$ is \emph{freely indecomposable} if it does not split as a non-trivial free product of groups, and one may rephrase this similarly to what was done above for direct indecomposability.

The purpose of this note is to explore another concept which falls into this type of framework. Let $G$ and $H$ be two groups. Then $H$ is a \emph{section} of $G$ if it is the quotient of a subgroup of $G$ and we denote it by $H\leq_s G$. We define a group $H$ to be \emph{sectionally indecomposable} if, whenever $H \leq_s A \times B$ for groups $A$ and $B$, then either $H \leq_s A$ or $H \leq_s B$.

Although this definition makes sense in general, the bulk of this work concerns itself solely with finite groups, so this will be tacitly assumed until the last subsection, where we briefly comment on this property for infinite groups. As such, one can very quickly see that the study of sectionally indecomposable finite groups is reduced to the monolithic case, as shown in \cref{l:monolithic}. With this in mind, the main result of this work is a characterisation of sectional indecomposability for monolithic primitive groups, that is, those with trivial Frattini subgroup.

\begin{thm}\label{t:main}
Let $G$ be a primitive monolithic group and let $N=\soc(G).$ Then $G$ is sectionally indecomposable if, and only if, either $N$ is non-abelian or $N$ is a $p$-group and $O_{p^\prime}(G/N)\neq1.$
\end{thm}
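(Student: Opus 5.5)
We would prove the two directions separately. Both rest on the same preliminary analysis of what it means for $G$ to be a section of a direct product.

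\emph{Preliminary analysis.} Suppose $G\cong K/L$ with $K\le A\times B$, and take $K$ minimal with this property, so that $L\le\Phi(K)$. Put $K_A=K\cap A$ and $K_B=K\cap B$; these are normal in $K$ and $[K_A,K_B]=1$. Their images $X=K_AL/L$ and $Y=K_BL/L$ are commuting normal subgroups of $G$.
\begin{itemize}
\item If $Y=1$, then $K_B\le L$, so $G$ is a quotient of $K/K_B\cong\pi_A(K)\le A$, and $G\le_s A$.
\item Symmetrically, if $X=1$ then $G\le_s B$.
\item Otherwise both $X$ and $Y$ contain the unique minimal normal subgroup $N$.
\end{itemize}
This reduction is used in both directions.

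\emph{Sufficiency.} If $N$ is non-abelian, the third case forces $[N,N]\le[X,Y]=1$, which is absurd, so $G$ is sectionally indecomposable. If $N$ is an abelian $p$-group, primitivity gives $C_G(N)=N$. In the third case $Y\le C_G(X)\le C_G(N)=N$, so $X=Y=N$. We then obtain a common quotient
\[Q=K/K_AK_B\cong \pi_A(K)/K_A\cong\pi_B(K)/K_B\]
that maps onto $H=G/N$, and $N$ is an $H$-module quotient of both $K_A$ and $K_B$. Write $G=N\rtimes H$, which is split since $G$ is primitive. It remains to show that $N\rtimes H$ is a section of $A_1=\pi_A(K)$ or of $B_1=\pi_B(K)$. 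This is where the notion of an $H$-Frattini module enters. Since $L\le\Phi(K)$, the module $N$ sits in $K$ in a "Frattini" way with respect to both factors simultaneously. Using the universal $p$-Frattini cover of $H$, we would show that the Frattini contributions from the $A$-side and the $B$-side cannot both be non-split. The decisive input is the Griess--Schmid theorem: the $p$-Frattini module of $H$ has no trivial/fixed-point obstruction exactly when $O_{p'}(H)\ne1$. From this we want to deduce that one of $K_A$, $K_B$ admits $N$ as a complemented section, that is, that $N\rtimes H\le_s A_1$ or $N\rtimes H\le_s B_1$.

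\emph{Necessity.} Assume $N$ is an abelian $p$-group and $O_{p'}(H)=1$. We would build an explicit counterexample. Let $A$ be a finite quotient of the universal $p$-Frattini cover of $H$ whose Frattini kernel has $N$ as an $H$-quotient. In $A$, every section with quotient $H$ is then a non-split Frattini extension, so $N\rtimes H\not\le_s A$. Let $B$ be chosen so that it contains $N\rtimes H$ only "diagonally", for instance a group of the form $H\times P$ with $P$ a suitable $p$-group or another non-split extension, so that $G\not\le_s B$. Finally, we exhibit $G$ inside $A\times B$ as the quotient of the fibre product over $H$ by an anti-diagonal copy of the kernel. Griess--Schmid is again what guarantees that, when $O_{p'}(H)=1$, the relevant Frattini module has enough trivial structure for such $A$ and $B$ to exist.

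The main obstacle is the abelian case of sufficiency: converting the condition $O_{p'}(G/N)\ne1$, via the $H$-Frattini module and Griess--Schmid, into a splitting statement on one side of the fibre product. One point needs care here: the degenerate case $H=1$, i.e.\ $G=C_p$. This group is clearly sectionally indecomposable by Cauchy's theorem, so it must be handled or excluded by the paper's convention on primitivity.
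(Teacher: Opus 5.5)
Your preliminary reduction is sound, and the non-abelian case is exactly the paper's \cref{p:mono_non-abelian}. Your remark on $G=C_p$ is also correct: the statement as written misclassifies it, and the paper treats $H=1$ separately and assumes $H\neq1$ in \cref{t:char-frath} and \cref{c:strong_indec_o-p'}.

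\textbf{Sufficiency, abelian case.} This is not proved. ``The Frattini contributions from both sides cannot both be non-split'' is only a hope. Griess--Schmid also does not say what you attribute to it: it says $C_H(A_p(H))=O_{p'}(H)$ unless $H$ is $p$-supersolvable with cyclic Sylow $p$-subgroups. Two steps are missing.
\begin{enumerate}
\item Show that $N$ is $H$-Frattini. This can be read off from your own set-up. In the third case $K_AL=K_BL$. Put $\Gamma=K/K_A(K_B\cap L)$, a quotient of $\pi_B(K)\leq B$. It has the minimal normal subgroup $K_BK_A/K_A(K_B\cap L)\cong K_B/(K_B\cap L)\cong_H N$. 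Its centraliser is the image of $K_AL$, i.e.\ of $L\leq\Phi(K)$, so it lies in $\Phi(\Gamma)$. The paper reaches this point through crowns and \cref{preparo}.
\item Show this is impossible when $O_{p'}(H)\neq1$. Factor out the $p'$-part of the nilpotent kernel. By the Frattini argument, with the kernel inside the Frattini subgroup, a $p'$-complement $S$ in the preimage of $O_{p'}(H)$ is normal. Hence $S$ centralises the $p$-kernel and therefore $N$, and faithfulness forces $O_{p'}(H)=1$. This is \cref{p:o_p'_trivial} together with \cref{l:o_p'}.
\end{enumerate}
Neither step appears in your proposal.

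\textbf{Necessity.} There are two further problems.
\begin{enumerate}
\item You merely assert that some finite quotient of ${}_p\widetilde H$ has $N$ as a chief factor inside the kernel, but this is the real content. The paper gets it from \cref{p:comp_factor}. Since $O_p(H)=O_{p'}(H)=1$ and $H\neq1$, $H$ is not $p$-supersolvable, so Griess--Schmid makes $A_p(H)$ faithful. Bryant--Kov\'acs then gives a regular $\mathbb F_pH$-module among the lower exponent-$p$ central factors of the free pro-$p$ kernel, so every irreducible module occurs. It may occur only in some $M_n$ with $n>0$ rather than in $A_p(H)$ itself, e.g.\ the module $P$ for $H=A_5$, $p=2$.
\item Your candidate $B=H\times P$ cannot work. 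By the construction of $\Gamma$ above, if $G\leq_s A\times B$ while $G$ is a section of neither factor, then some subgroup of $B$ has a chief factor $H$-equivalent to $N$. But a chief factor of $S\leq H\times P$ has only two options. Either it is acted on through the $p$-group $\pi_P(S)$, which is impossible as $O_p(H)=1\neq H$. Or it is centralised by $S\cap P$ and is a chief factor of a quotient of $\pi_H(S)\leq H$, which is impossible as it would force $|H|\geq|H||N|$.
\end{enumerate}
The paper instead takes both factors equal to the group $\Gamma$ witnessing that $N$ is $H$-Frattini, and realises $G$ as $N^2\,\mathrm{diag}(\Gamma)/\mathrm{diag}(C_\Gamma(N))$. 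It proves $G\not\leq_s\Gamma$ by the Fitting-subgroup counting argument in the proof of \cref{t:char-frath}. Your claim that ``every section with quotient $H$ is non-split'' does not replace that argument.
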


This result is surprising for a couple of reasons. The first criterion to detect sectional indecomposability that we proved, which is the content of \cref{t:char-frath}, is very technical and a priori very hard to check. To obtain the main result above, we had to make a detour via the notions of the $p$-Frattini module and of the universal $p$-Frattini cover, the latter being a virtually pro-$p$ group that enjoys many special properties that are central to our proofs. These objects have been largely studied and exploited in contexts quite orthogonal to this work, as can be seen for instance in \cite{fried}.

Furthemore, drawing these connections in order to obtain the improved characterisation in \cref{t:main} has immediate consequences which could not have been made clear otherwise: for instance, it shows that \emph{every} monolithic primitive solvable group is sectionally indecomposable.

It follows immediately from the proof of \cref{l:monolithic} that if $G$ is not monolithic, then $G$ is neither sectionally indecomposable nor subdirectly indecomposable, where the latter means that whenever $G$ is a subdirect product of two groups, $G$ is a section of one of the two factors. However, the two notions are not equivalent when $G$ is monolithic. Indeed, any finite monolithic group is subdirectly indecomposable (see \cref{p:monsub}).

Now, we make some comments on the proofs of \cref{t:char-frath} and \cref{t:main}, respectively. The non-trivial case to be considered is that of monolithic primitive groups with abelian socle, as the ones with non-abelian socle are handled immediately in \cref{p:mono_non-abelian}. Then the important step is to single out the correct definitions. For a group $H$ and an $H$-module $A$, we introduce in \cref{frath} the notion of an \emph{$H$-Frattini module}: $A$ satisfies this condition if there exists a group $G$ with a normal subgroup $N$ such that $N$ is $H$-equivalent to $A$ (\cref{defequiv}) and $C_G(N) \leq \Phi(G)$. Pictorially, the situation is as follows

\[
\begin{tikzpicture}
    \filldraw[black]
        (0,0)    circle [radius=1pt]
        (0,-1.2) circle [radius=1pt]
        (0,-2)   circle [radius=1pt]
        (0,-3)   circle [radius=1pt];

    \draw (0,0) -- (0,-3);

    \node[left] at (0,0) {$G$};
    \node[left] at (0,-1.2) {$C_G(N)$};
    \node[left] at (0,-2) {$N$};
    \node[left] at (0,-3) {$\{1\}$};

    \draw[decorate,decoration={brace,amplitude=5pt}]
        (0.25,0.05) -- (0.25,-1.25)
        node[midway,right=8pt] {$\cong H$};

    \draw[decorate,decoration={brace,amplitude=5pt}]
        (0.25,-1.95) -- (0.25,-3.05)
        node[midway,right=8pt] {$\cong A$};

    \draw[decorate,decoration={brace,amplitude=5pt}]
        (1.7,-1.25) -- (1.7,-3.05)
        node[midway,right=8pt] {$\leq \Phi(G)$};
\end{tikzpicture}
\]

\noindent along with a compatibility between the action of $G$ on $N$ and the action of $H$ on $A$. In the setting where $X=A \rtimes H$ is a monolithic primitive group with abelian socle, we prove in \cref{t:char-frath} that $X$ is not sectionally indecomposable if, and only if, $A$ is an $H$-Frattini module. For the `if' direction, if $A$ is $H$-Frattini, a group $G$ as above is just right in the sense that it easily allows us to find $X$ as a section of $G\times G$, while $G$ itself does not admit $X$ as a section. For the `only if' direction, we use crown theory to show that if $X$ is not sectionally indecomposable, we can find a $G$ as above, showing that $A$ is $H$-Frattini.

The issue with this characterisation is that it is in principle very hard to check. This is where the notion of the \emph{$p$-Frattini module $A_p(H)$} of a group $H$ for a prime $p$, due to Gäschütz \cite{ga54}, comes into play. It is such that there exists a Frattini extension $\widetilde H$ of $A_p(H)$ by $H$ (i.e. $A_p(H) \leq \Phi(\widetilde H)$) which is universal in the following sense: whenever $E$ is a Frattini extension of an $\mathbb F_p H$-module $B$ by $H$, then $E$ is an epimorphic image of $\widetilde H$ over $H$. This construction can be iterated, with each step of its iteration being recorded in one single object: the \emph{universal $p$-Frattini cover ${}_p \widetilde H$} of $H$. The details pertaining to these objects are taken up in \cref{subsec:fratti_mod}. Recall that if $X = A \rtimes H$ is a monolithic primitive group with abelian $\soc (X)=A$, then $A$ is an elementary abelian $p$-group for some prime $p$ which is faithful and irreducible as an $H$-module. When $A$ is $H$-Frattini, the group $G$ as above, after a suitable reduction, is a Frattini extension of a $p$-group by $H$, in such a way that $G$ will be an epimorphic image of ${}_p\widetilde H$. We leverage this connection and the work of Griess--Schmid, particularly \cite[Theorem 3]{GS78}, to prove the equivalence stated in \cref{t:main}.

This note is structured as follows. In \cref{sec:prelim}, we define sectional indecomposability and prove basic results related to it, recall the relevant facts of crown theory and of the $p$-Frattini module and prove auxiliary results in the context of the latter. In \cref{sec:monolithic}, we restrict ourselves to the study of sectional indecomposability in monolithic groups, with a focus on the case of primitive groups with abelian socle. In \cref{sec:main}, we prove \cref{t:main}. Finally, in \cref{sec:final}, we indicate how the non-primitive case seems a lot harder and is not in general amenable to the approaches developed here. In particular, the case of non-cyclic monolithic $p$-groups, equivalently, those with cyclic center, is highlighted. We prove that a few examples of $p$-groups are not sectionally indecomposable and leave open the question of the existence or not of a non-cyclic monolithic sectionally indecomposable $p$-group.

\section{Preliminaries}\label{sec:prelim}

\subsection{Sectionally indecomposable groups}

Here, we will define the notion of sectional indecomposability and prove some basic results. Let $G$ and $H$ be finite groups. One says that $G$ is a \emph{section} of $H$ if there exist $L \unlhd K \leq H$ such that $K/L \cong G$, and we denote this by $G \leq_s H$. 

\begin{definition}\label{d:strong_indec}
    A finite group $G$ is \emph{sectionally indecomposable} if, for finite groups $A$ and $B$, the condition $G \leq_s A \times B$ implies $G \leq_s A$ or $G \leq_s B$.
\end{definition}

The following easy characterisation of sectional indecomposability will be useful in a few proofs and in finding examples of groups which do not satisfy this property.

\begin{lemma}\label{l:strong_indec_subdirect}
    Let $G$ be a finite group. Then $G$ is sectionally indecomposable if, and only if, whenever a subdirect product $X \leq A \times B$ is such that $G$ is an epimorphic image of $X$, then $G \leq_s A$ or $G \leq_s B$.
\end{lemma}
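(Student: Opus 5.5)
We prove the two implications separately. The forward direction needs only the fact that an epimorphic image of a subgroup is a section. The converse reduces an arbitrary section of $A\times B$ to an epimorphic image of a subdirect product of subgroups of $A$ and $B$, and then uses that sections of sections are sections.

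For the forward direction, suppose $G$ is sectionally indecomposable. Let $X \leq A\times B$ be a subdirect product with an epimorphism $X \to G$. Then $G \cong X/L$ for some $L \unlhd X$, so $G \leq_s A\times B$. By \cref{d:strong_indec}, $G \leq_s A$ or $G \leq_s B$.

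For the converse, assume the condition on subdirect products and suppose $G \leq_s A\times B$, so that $G \cong K/L$ with $L \unlhd K \leq A\times B$. Let $\pi_A,\pi_B$ be the coordinate projections and put $A_0=\pi_A(K)\leq A$ and $B_0=\pi_B(K)\leq B$. Then $K \leq A_0\times B_0$ is a subdirect product of $A_0$ and $B_0$, and $G$ is an epimorphic image of $K$. Applying the hypothesis to the pair $(A_0,B_0)$ gives $G\leq_s A_0$ or $G\leq_s B_0$.

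It remains to check that $\leq_s$ is transitive, since $A_0 \leq_s A$ and $B_0\leq_s B$ trivially. Suppose $G\cong K_1/L_1$ with $L_1\unlhd K_1\leq A_0$. Then $K_1\leq A$ and $L_1\unlhd K_1$, so $G\leq_s A$ directly, and the same argument works for $B$. In general, transitivity follows from the correspondence theorem: if $M\cong K/L$ and $G\cong K'/L'$ with $L'\unlhd K'\leq K/L$, take the full preimages $\tilde K'\leq K$ of $K'$ and $\tilde L'\unlhd \tilde K'$ of $L'$; then $G\cong \tilde K'/\tilde L'$. This completes the converse.

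There is no real obstacle here. The one point needing care is to pass to the projections $A_0$ and $B_0$, so that the hypothesis, which is stated for subdirect products, actually applies. The fact that $\leq_s$ is preserved when moving up to the overgroups $A$ and $B$ is immediate from the definition.
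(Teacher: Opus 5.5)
Your proof is correct and follows essentially the same route as the paper: for the converse you replace $A$ and $B$ by $\pi_A(K)$ and $\pi_B(K)$, so that $K$ becomes a subdirect product with $G$ as a quotient, and then use that a section of a subgroup of $A$ (resp.\ $B$) is a section of $A$ (resp.\ $B$). Your discussion of general transitivity of $\leq_s$ is correct but not needed, since only this subgroup case is used.
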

\begin{proof}
    The direction $(\Rightarrow)$ is clear. If $G \leq_s A \times B$, then there exist $K \unlhd H \leq A \times B$ such that $H/K \cong G$. If we denote by $\pi_A$ and $\pi_B$ the projections of $A \times B$ onto $A$ and $B$, respectively, then $H \leq \pi_A(H) \times \pi_B(H)$ is subdirect and surjects onto $G$, so either $G \leq_s \pi_A(H)$ or $G \leq_s \pi_B(H)$, and we are done.
\end{proof}

Notice that, by induction, our definition of sectionally indecomposable is equivalent to the following statement: $G \leq_s A_1 \times \cdots \times A_n$ implies $G \leq_s A_i$ for some $i$. Recall that a finite group $G$ is \emph{monolithic} if it contains a unique minimal normal subgroup.

\begin{lemma}\label{l:monolithic}
	If a finite group $G$ is sectionally indecomposable, then $G$ is monolithic.
\end{lemma}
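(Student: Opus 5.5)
We argue by contraposition: assuming $G$ is not monolithic, we exhibit $G$ as a subdirect product of two groups, neither of which has $G$ as a section. First dispose of the trivial group: $G=1$ is a section of every group, so the statement should be read for non-trivial $G$, or with the convention that the trivial group is monolithic. This is harmless. So let $G\neq 1$ be non-monolithic. Since $G$ is finite and non-trivial, it has at least one minimal normal subgroup. Non-monolithic then means it has two distinct minimal normal subgroups $M_1\neq M_2$, and minimality forces $M_1\cap M_2=1$.

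Consider the map $G\to G/M_1\times G/M_2$, $g\mapsto (gM_1,gM_2)$. Its kernel is $M_1\cap M_2=1$, so it embeds $G$ into $A\times B$ with $A=G/M_1$ and $B=G/M_2$. In particular $G\leq_s A\times B$. (One could equally invoke \cref{l:strong_indec_subdirect}, since the image is subdirect, but the embedding already suffices.)

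It remains to check that $G\not\leq_s G/M_i$ for $i=1,2$. This follows from an order count. Any section of a finite group $Y$ has order at most $|Y|$, while $|G/M_i|=|G|/|M_i|<|G|$ because $M_i\neq 1$. So $G$ cannot be isomorphic to a section of $G/M_i$. Hence $G$ is not sectionally indecomposable, which proves the lemma.

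There is no real obstacle here. The only points needing care are the trivial-group convention and the remark that $M_1\cap M_2=1$ for distinct minimal normal subgroups. The argument also shows that a non-monolithic $G$ is not subdirectly indecomposable, as the introduction asserts.
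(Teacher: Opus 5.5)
Your proof is correct and is the paper's argument: embed $G$ into $G/M_1\times G/M_2$ using two distinct minimal normal subgroups. Your order count spells out why $G$ is a section of neither factor, a step the paper leaves implicit, and your caveat about the trivial group is a fair point the paper does not address.
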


\begin{proof}
	If $G$ possesses two distinct minimal normal subgroups $N_1$ and $N_2$, then $G$ embeds in $G/N_1 \times G/N_2$, contradicting the sectional indecomposability of $G$.
\end{proof}

The proof of the previous lemma suggests a further definition: a finite group $G$ is \emph{subdirectly indecomposable} if, for finite groups $A$ and $B$, the condition $G \leq A \times B$ implies $G \leq_s A$ or $G \leq_s B$. It is easy to see that the following result holds.

\begin{proposition}\label{p:monsub} A finite group $G$ is subdirectly  indecomposable if and only if it is  monolithic. 
\end{proposition}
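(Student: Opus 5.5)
The equivalence has two directions. The forward direction (subdirectly indecomposable implies monolithic) repeats the argument of \cref{l:monolithic}. Take $G\neq 1$ subdirectly indecomposable; the trivial group is conventionally excluded, or handled by the convention that $1$ is not monolithic. Suppose $G$ has two distinct minimal normal subgroups $N_1,N_2$. Then $N_1\cap N_2=1$, so the map $g\mapsto (gN_1,gN_2)$ embeds $G$ into $G/N_1\times G/N_2$. By hypothesis, $G\leq_s G/N_i$ for some $i$, which is impossible since $|G/N_i|<|G|$. The case where $G$ has no minimal normal subgroup only arises for $G=1$.

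For the converse, let $G$ be monolithic with minimal normal subgroup $N$, and suppose $G\leq A\times B$. I would identify $G$ with its image and set $K_A=G\cap B=\ker(\pi_A|_G)$ and $K_B=G\cap A=\ker(\pi_B|_G)$. These are normal subgroups of $G$ with $K_A\cap K_B\leq (A\times 1)\cap(1\times B)=1$. Since every nontrivial normal subgroup of a monolithic group contains $N$, at least one of $K_A,K_B$ must be trivial. If $K_A=1$, then $\pi_A$ restricts to an injection $G\to A$, so $G$ is isomorphic to the subgroup $\pi_A(G)$ of $A$, and in particular $G\leq_s A$. The case $K_B=1$ gives $G\leq_s B$ symmetrically.

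There is no real obstacle here. The only point needing care is the monolithic property, namely that two nontrivial normal subgroups must intersect nontrivially because both contain $N$. It is this fact, together with $K_A\cap K_B=1$, that forces one projection to be injective. Note that the argument relies on $G$ being an actual subgroup of $A\times B$ rather than a section, which explains why subdirect indecomposability is strictly weaker than sectional indecomposability.
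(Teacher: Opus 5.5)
Your proof is correct and is essentially the paper's argument: the forward direction uses the embedding $G\hookrightarrow G/N_1\times G/N_2$, and the converse uses that the kernels of the two projections restricted to $G$ are normal subgroups with trivial intersection. The paper phrases the converse contrapositively (if $G$ is a section of neither factor, both kernels are nontrivial, so $G$ is not monolithic), and your caveat about the trivial group is a fair point that the paper leaves implicit.
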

\begin{proof}
As we noticed in the proof of \cref{l:monolithic}, if $G$ possesses two distinct minimal normal subgroups $N_1$ and $N_2$, then $G$ embeds in $G/N_1 \times G/N_2$, hence it is not subdirectly indecomposable. Conversely, assume that $G\leq H_1\times H_2$, but neither $G\leq_s H_1$ nor $G\leq_s H_2.$ For $1\leq i\leq 2,$ consider the projection $\pi_i\colon G\leq H_1\times H_2\to H_i.$ Since $G/\ker \pi_i\cong G^{\pi_i}\leq H_i,$ it follows from $G\not\leq_s H_i$ that $\ker \pi_i\neq 1$. In particular, $\ker \pi_1$ and $\ker \pi_2$ are distinct non-trivial normal subgroups of $G$ with trivial intersection. Hence $G$ is not monolithic, as required.
\end{proof}

\cref{l:monolithic} immediately reduces the problem of determining whether a group $G$ is sectionally indecomposable to the case where $G$ is a monolithic group, which will be taken up in \cref{sec:monolithic}.

\begin{lemma}\label{l:H2}
Suppose a finite group $G$ can be written as $M \rtimes H$ with $M$ abelian and let $H^2(H,M)$ be the second cohomology group associated to this action. If there exists a non-split extension $E$ of $M$ by $H$ which defines a non-trivial element of $H^2(H,M)$ and $E\not\cong G$, then $G$ is not sectionally indecomposable.
\end{lemma}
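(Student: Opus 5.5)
Take the two extensions $E$ (non-split, class $\varepsilon\neq 0$) and $E'$ (class $-\varepsilon$, also non-split) of $M$ by $H$. Their fibre product over $H$,
\[
X=\{(e,e')\in E\times E' : \pi(e)=\pi'(e')\},
\]
will be shown to have $G$ as an epimorphic image, while neither $E$ nor $E'$ has $G$ as a section. Here $\pi\colon E\to H$ and $\pi'\colon E'\to H$ are the projections. By definition, $X$ is a subdirect product of $E\times E'$ (both projections are onto) with kernel $M\times M$ over $H$.

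First I show $G$ is a quotient of $X$. The antidiagonal $D=\{(m,-m): m\in M\}$ is normal in $X$: it is central in $M\times M$, and conjugation by $(e,e')$ with $\pi(e)=\pi'(e')=h$ acts on both coordinates via the same $H$-action. Hence $X/D$ is an extension of $(M\times M)/D\cong M$ (via $(m_1,m_2)\mapsto m_1+m_2$) by $H$. At the cocycle level, choose sections $s,s'$ with cocycles $f$ and $f'$, where $f'$ is cohomologous to $-f$. Then $h\mapsto(s(h),s'(h))$ is a section of $X$ whose image in $X/D$ has cocycle $f+f'$, which is a coboundary. So $X/D$ is the split extension, i.e. $X/D\cong M\rtimes H=G$. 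This is the Baer sum computation and is routine.

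Next I show $G\not\leq_s E$. Since $|E|=|M||H|=|G|$, any section of $E$ isomorphic to $G$ must be $E$ itself, which would give $E\cong G$, contrary to hypothesis. For $E'$ I also need $E'\not\cong G$. This is the main obstacle: the hypothesis only says $E\not\cong G$, and in principle $E'$ could be isomorphic to $G$ as an abstract group even though it is non-split. I would resolve it by replacing $E'$ with $E$ twisted by the automorphism $-1$ of $M$. Multiplication by $-1$ is an $H$-module automorphism of $M$ because $M$ is abelian, and it sends class $\varepsilon$ to $-\varepsilon$. Concretely, $E'$ can be taken to be $E$ itself, with the identification of its kernel with $M$ precomposed by $m\mapsto -m$. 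Then $E'\cong E$ as abstract groups, so $E'\not\cong G$ and $G\not\leq_s E'$ by the same order argument.

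Finally I conclude with \cref{l:strong_indec_subdirect}. The group $X\leq E\times E'$ is subdirect and maps onto $G$, but $G$ is a section of neither $E$ nor $E'$. Hence $G$ is not sectionally indecomposable. Equivalently, $G\leq_s E\times E$, while $G\not\leq_s E$.
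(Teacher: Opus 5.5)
Your proof is correct and is essentially the paper's construction. Once you unwind your choice of $E'$ as $E$ with its kernel identification twisted by $-1$, your fibre product is exactly the paper's $\mathrm{diag}(E)(M\times M)\leq E\times E$, and your antidiagonal $D$ becomes the paper's $\mathrm{diag}(M)$; the only difference is that the paper verifies splitting directly (the image of $\mathrm{diag}(E)$ is a complement to $(M\times M)/\mathrm{diag}(M)$) rather than via a Baer-sum cocycle computation, and your order argument $|E|=|G|$ usefully makes explicit why $E\not\cong G$ gives $G\not\leq_s E$.
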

\begin{proof}
Let $E$ be a non-split extension of $M$ by $H$ as in the hypothesis. Let $L = E \times E$ and, for any subgroup $U \leq E$, define
$$\mathrm{diag}(U) = {(u,u) \in E \times E : u \in U}.$$
Consider the subgroups $X = \mathrm{diag}(E)(M \times M)$ and $K = \mathrm{diag}(M) \unlhd X$ of $L$. Then
$X/K \cong (M \times M)/K \rtimes \mathrm{diag}(E)/K \cong M \rtimes H,$
where the action of $\mathrm{diag}(E)/K \cong H$ on $(M \times M)/K \cong M$ is induced by conjugation in $E$, hence it coincides with the initial action of $H$ on $M$. This implies $X/K \cong G$, and therefore $G \leq_s E \times E$. Since $E \not\cong G$, we have $G \not\leq_s E$, so $G$ is not sectionally indecomposable.
\end{proof}

\begin{corollary}\label{l:H2m}
   If $G=M\rtimes H$ is a primitive monolithic group,  $M=\soc(G)$ is abelian and $H^2(H,M) \neq 0$, then $G$ is not sectionally indecomposable.
\end{corollary}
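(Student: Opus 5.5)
The plan is to apply \cref{l:H2} directly. Since $H^2(H,M)\neq 0$, there is a cohomology class $0\neq \xi\in H^2(H,M)$, and this class defines a non-split extension
\[
1\to M\to E\to H\to 1
\]
inducing the given action of $H$ on $M$. So the hypotheses of \cref{l:H2} reduce to a single check: $E\not\cong G$ as abstract groups. Once this is shown, \cref{l:H2} gives that $G$ is not sectionally indecomposable.

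To prove $E\not\cong G$, I will use that $M=\soc(G)$ is abelian and $G$ is primitive monolithic. Then $M$ is an elementary abelian $p$-group and a faithful irreducible $H$-module, so $C_G(M)=M$. I will first show that $M$ is also the unique minimal normal subgroup of $E$.

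\begin{itemize}
\item $M$ is minimal normal in $E$, because $E$ acts on $M$ through $H$, irreducibly.
\item $C_E(M)=M$, because the action of $E/M\cong H$ on $M$ is faithful.
\item Any minimal normal subgroup $L\neq M$ of $E$ would satisfy $L\cap M=1$. Then $[L,M]=1$, so $L\le C_E(M)=M$, a contradiction.
\end{itemize}

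Hence $E$ is monolithic with socle $M$. The key point is then whether $M$ has a complement in $E$.

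Suppose $E\cong G$ via some isomorphism $\varphi$. Then $\varphi$ maps $\soc(E)=M$ onto $\soc(G)=M$. Since $M$ has a complement $H$ in $G$, the subgroup $\varphi^{-1}(H)$ would be a complement to $M$ in $E$. That contradicts the extension being non-split, so $E\not\cong G$.

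The main (mild) obstacle is exactly this intrinsic identification of $M$ inside $E$. It is what lets "non-split" be read as an isomorphism invariant, rather than a property of one chosen embedding of $M$. Beyond that, the argument is a routine application of \cref{l:H2}.
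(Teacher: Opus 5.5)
Your proof is correct. Like the paper, you reduce everything to \cref{l:H2} and the single check $E\not\cong G$; the difference lies in the invariant you use to separate $E$ from $G$.

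The paper argues via the Frattini subgroup: $\Phi(G)=1$ because $G$ is primitive, whereas $M\le\Phi(E)$. That second claim tacitly uses two facts. First, $M$ is minimal normal in $E$, by irreducibility. Second, an abelian minimal normal subgroup not contained in the Frattini subgroup is complemented: if $E=MK$ with $K$ maximal, then $M\cap K\unlhd E$, which forces $M\cap K=1$.

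You instead use the socle. Faithfulness gives $C_E(M)=M$, hence $E$ is monolithic with $\soc(E)=M$. So any isomorphism $E\to G$ carries $M$ onto $M$ and pulls back the complement $H$, contradicting non-splitting.

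Your version is self-contained and makes explicit why ``non-split'' is an isomorphism invariant here, which the paper's one-line proof leaves to the reader. The paper's version is shorter and, on the $E$ side, needs only irreducibility rather than faithfulness. Faithfulness holds anyway, since $G$ is primitive monolithic.
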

\begin{proof}
If $H^2(H,M) \neq 0$, there exists a non-split extension $E$ of $M$ by $H$. Since $\Phi(G) = 1$ while $M \leq \Phi(E)$, $E$ and $G$ are not isomorphic and the conclusion follows from \cref{l:H2}.
\end{proof}

Suppose that $G$ is not sectionally indecomposable. We know that there exist $H_1, H_2$ which do not admit $G$ as a section, 
but such that $G \leq_s H_1 \times H_2$. One may ask whether, given that 
$G$ belongs to an assigned class $\mathcal{C}$, the subgroups $H_1$ and 
$H_2$ can also be chosen within that class. From the following lemma we 
will deduce that the answer is affirmative for many classes of groups, 
such as nilpotent, solvable, supersolvable groups, and $\pi$-groups, for 
any set $\pi$ of primes.

\begin{lemma}\label{class}Let $\mathcal C$ be a class of finite groups which satisfies the following properties:
	\begin{enumerate}
		\item if $Y \unlhd X$ and $X \in \mathcal{C}$, then $X/Y \in \mathcal{C}$;
		\item if $X/\Phi(X)\in \mathcal C,$ then $X\in \mathcal C.$
	\end{enumerate}
	If $N$ is a normal subgroup of a finite group $G$ and $G/N\in \mathcal C,$ then there exists $H\in \mathcal C$ such that $G=HN.$
\end{lemma}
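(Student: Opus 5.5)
Choose $H$ minimal among the subgroups of $G$ that satisfy $G = HN$ and $H/(H\cap N) \in \mathcal{C}$. Such subgroups exist, since $H=G$ qualifies: $G/N \in \mathcal{C}$ by hypothesis. Having fixed this $H$, we show that $H\cap N \le \Phi(H)$. Granting this, property (1) completes the argument. Indeed, $\Phi(H) \unlhd H$, so $H/\Phi(H)$ is a quotient of $H/(H\cap N)\in\mathcal C$ and hence lies in $\mathcal C$. Property (2) then gives $H\in\mathcal C$, and $G=HN$ holds by construction.

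The key step is the Frattini containment, which we prove by contradiction using the minimal choice of $H$. Suppose $H\cap N\not\le\Phi(H)$. Then some maximal subgroup $M$ of $H$ does not contain $H\cap N$. Since $H\cap N\unlhd H$, the product $M(H\cap N)$ is a subgroup properly containing $M$, so $M(H\cap N)=H$. We check that $M$ also satisfies the defining conditions. First, $MN \supseteq M(H\cap N)N = HN = G$, so $G = MN$. Second, the modular law gives
\[
M/(M\cap N) = M/(M\cap H\cap N) \cong M(H\cap N)/(H\cap N) = H/(H\cap N)\in\mathcal C .
\]
Thus $M$ is a proper subgroup of $H$ satisfying both conditions, which contradicts the minimality of $H$. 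Hence $H\cap N\le\Phi(H)$.

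The argument is short, and the only real design choice is the minimality condition. Minimising over subgroups $H$ with merely $G=HN$ would not work, because property (2) must be applied to $H$ itself. We therefore need to know that a quotient of $H$ by something inside $\Phi(H)$ lies in $\mathcal C$. The condition $H/(H\cap N)\cong G/N\in\mathcal C$ holds automatically for every supplement $H$ of $N$, so adding it to the minimality requirement costs nothing. What makes the argument work is the isomorphism $M/(M\cap N)\cong H/(H\cap N)$ displayed above, which shows this condition passes down to the smaller supplement $M$.
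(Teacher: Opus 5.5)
Your proof is correct and is essentially the paper's argument. The paper inducts on $|G|$: when $N\not\le\Phi(G)$ it passes to a maximal subgroup $M$ with $G=MN$, using $M/(M\cap N)\cong G/N$. You package the same descent as a choice of a minimal supplement $H$ and show $H\cap N\le\Phi(H)$. One small inconsistency in your closing commentary: the condition $H/(H\cap N)\in\mathcal C$ holds automatically for any supplement, so minimising over all $H$ with $G=HN$ works just as well, contrary to your remark that it ``would not work''.
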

\begin{proof}
	We prove the statement by induction on the order of the group. If $N\leq \Phi(G),$ then $G/\Phi(G)$ is an epimorphic image of $G/N$, so
	by (1) $G/\Phi(G) \in \mathcal C$ and therefore it follows from (2)
	that $G\in \mathcal C.$
	So we may assume $N\not\leq \Phi(G).$ In this case, there exists a maximal subgroup $M$ of $G$ such that $G=MN.$ Moreover, $M/M\cap N \cong MN/N= G/N \in  \mathcal C$ so there exists $H\in \mathcal C$ such that
	$M=(M\cap N)H$ and therefore $G=NM=N(M\cap N)H=NH.$
\end{proof}

\begin{corollary}Suppose that $\mathcal{C}$ is a class of finite groups satisfying the 
hypotheses of the previous lemma. If $G \in \mathcal{C}$ is not sectionally
indecomposable, then there exist $H_1, H_2 \in \mathcal{C}$ such that 
$G \leq_s H_1 \times H_2$, but $G$ is a section of neither $H_1$ nor $H_2$.
\end{corollary}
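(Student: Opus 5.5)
Since $G \in \mathcal{C}$ is not sectionally indecomposable, \cref{l:strong_indec_subdirect} gives a subdirect product $X \leq A \times B$ and an epimorphism $X \to G$, with $G \not\leq_s A$ and $G \not\leq_s B$. We cannot expect $A$, $B$ or $X$ to lie in $\mathcal C$. The plan is to replace $X$ by a subgroup $Y \in \mathcal{C}$ that still maps onto $G$, and then take $H_1, H_2$ to be the projections of $Y$. Concretely, let $\varphi\colon X \to G$ be the epimorphism and set $N = \ker \varphi$. Then $X/N \cong G \in \mathcal{C}$, so \cref{class} applies (with $X$ in the role of the ambient group) and produces $Y \in \mathcal{C}$ with $X = YN$.

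Next we check that $Y$ still surjects onto $G$. Indeed, $\varphi(Y) = \varphi(YN) = \varphi(X) = G$, so $G \cong Y/(Y \cap N)$. Now let $\pi_A, \pi_B$ be the projections of $A \times B$, and put $H_1 = \pi_A(Y) \leq A$ and $H_2 = \pi_B(Y) \leq B$. Both are epimorphic images of $Y \in \mathcal{C}$, so property (1) of \cref{class} gives $H_1, H_2 \in \mathcal{C}$. Since $Y \leq H_1 \times H_2$ and $G$ is a quotient of $Y$, we get $G \leq_s H_1 \times H_2$.

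Finally, $G$ is a section of neither factor. If $G \leq_s H_1$, then, because $H_1 \leq A$ and a section of a subgroup is a section of the whole group, we would have $G \leq_s A$, which is false. The same argument rules out $G \leq_s H_2$.

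The only substantive step is the application of \cref{class}, which is the reason that lemma was stated. Everything else is transitivity of sections and closure of $\mathcal{C}$ under quotients. One point to keep straight is that \cref{class} is applied to the subgroup $X$ of $A\times B$, not to $G$; this is why we use the subdirect reformulation rather than the bare definition.
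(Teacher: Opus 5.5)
Your proof is correct and matches the paper's argument: apply the previous lemma to the subgroup $X\leq A\times B$ with $N$ the kernel of the map onto $G$, obtain $Y\in\mathcal C$ with $X=YN$, and take the two projections of $Y$. These projections lie in $\mathcal C$ by closure under quotients, and they have $G$ as a section of neither, because they are subgroups of $A$ and $B$. One cosmetic remark: the subdirectness of $X$ is never used (the paper also only needs $X\leq H_1\times H_2$), so the bare definition of a section would have worked just as well as the subdirect reformulation.
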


\begin{proof}Suppose $G\leq_s H_1\times H_2,$ and that $G$ is a section  of neither $H_1$ nor $H_2.$ We may assume that $G=X/N,$ with $X$ a subdirect product of $H_1\times H_2.$ By the previous lemma there exists $Y\in \mathcal C$ such that $X=YN$. In particular $G\cong Y/(Y\cap N).$ For $1\leq i\leq 2,$ consider the projection $\pi_i: H_1\times H_2\to H_i$. Then $K_i=Y^{\pi_i}\in \mathcal C$ and $G\leq_s K_1\times K_2.$ Moreover, since $K_i\leq H_i,$ $G$ is not a section of $K_i.$
\end{proof}

We note that the previous result does not hold in general if one does not assume
that the class $\mathcal{C}$ is such that if $G/\Phi(G) \in \mathcal{C}$,
then $G \in \mathcal{C}$. Consider, for example, for $p$ an odd prime,
the class $\mathcal{C}$ of finite $p$-groups of exponent $p$.
Let $G$ be the non-abelian group of order $p^3$ and exponent $p$.
We will see in \cref{pgruppi} that $G$ is not sectionally indecomposable. Nevertheless, suppose $G \leq_s H_1 \times H_2$ with $H_1, H_2 \in \mathcal{C}$.
Since $G$ is non-abelian, at least one of $H_1, H_2$ must be non-abelian; we may assume that $H_1$ is non-abelian. It is easy to see that any non-abelian group of exponent $p$ has a non-abelian quotient of order $p^3$, which must be isomorphic to $G$, so $G \leq_s H_1$.

\medskip

In order to prove our main results, we recall two notions that will be key to our proofs: that of a crown in a finite group and that of a $p$-Frattini module.

\subsection{Crowns in finite groups}

The notion of crowns was introduced by Gasch\"{u}tz  in \cite{praef} in the case of finite soluble groups and generalised in \cite{JL} to arbitrary finite groups.
A detailed exposition of the theory is also given in \cite[Section 1.3]{boez}.
Let $G$ be  a finite group, and $V$ an irreducible $G$-module. The monolithic primitive group $L_V$ associated to $V$ is defined as the semidirect product $L_V=V \rtimes G/C_G(V).$ It turns out that $L_V$ is an epimorphic image of $G$ if and only if $G$ has a complemented chief factor $G$-isomorphic to $V.$ Let $R_G(V)$ be the smallest normal subgroup contained in $C_G(V)$ and satisfying
the property that $C_G(V)/R_G(V)$ is complemented in $G/R_G(V)$ and isomorphic as a $G$-module to a direct product of copies of $V.$ It turns out that
$R_G(V)$ coincides with the intersection $\cap_{M \in \Delta(G,V)}M,$ where $\Delta(G,V)$ is the set of $G$-invariant
subgroups $M$ of $C_G(V)$ such that $C_G(V)/M$ is $G$-isomorphic to $V$ and $C_G(V)/M$ has a complement in $G/M.$ The
factor $C_G(V)/R_G(V)$ is called the $V$-crown of $G$. The number $\delta_G(V)$ such that $C_G(V)/R_G(V)\cong_G
V^{\delta_G(V)}$
is called the 
	$V$\nobreakdash-rank of $G$ and coincides with the number of
complemented factors $G$\nobreakdash-isomorphic to $V$ in any
chief series of $G.$ Notice in particular that if $N$ is a normal subgroup of $G$ and $G/N \cong L_V$ then $R_G(V)\leq N\leq C_G(V).$

\subsection{The $p$-Frattini module}\label{subsec:fratti_mod}

We recall a result from Gasch\"{u}tz \cite{ga54}. Let $G$ be a finite group, $p$ a prime, and $\mathbb{F}_p$ the field with $p$ elements. Then there is a finite $\mathbb{F}_pG$-module $A$ and a Frattini extension $\widetilde{G}$ of $A$ by $G$ (i.e. $A\leq \Phi(\widetilde{G})$), with the property that any other Frattini extension of a finite $\mathbb{F}_pG$-module by $G$ is an epimorphic image over $G$ of $\widetilde{G}$. We will call this module the \emph{$p$-Frattini module} of $G$, denoted by $A_p(G)$, and the extension $\widetilde G$ the \emph{universal $p$-Frattini extension} of $G$. From the definition, it can be seen immediately that if $p$ does not divide the order of $G$, then $A_p(G)$ is trivial. One can construct the $p$-Frattini module in many different ways. For instance, via properties of free groups in \cite[Theorem 11.8]{DH}; or if
\[\cdots \rightarrow P_1 \xrightarrow{\varepsilon_1} P_0 \rightarrow \mathbb F_p \rightarrow 0 \]
is the minimal projective resolution of the trivial $\mathbb F_pG$-module, then $A_p(G) = \ker \varepsilon_1$ in \cite{ga54} or \cite[Theorem $\beta$.7]{DH}. The latter yields interesting cohomological consequences, such as \cite[Lemma 3]{GS78}.

This process can be iterated. Let $G_0=G$ and $M_0 = A_p(G)$ and define $G_n = \widetilde{G}_{n-1}$ and $M_n = A_p(G_n)$. The $G_i$ form an inverse system whose inverse limit yields a profinite group ${}_p \widetilde{G}$, which is the \emph{universal Frattini $p$-cover} of $G$. It is a $p$-projective group (i.e., its $p$-cohomological dimension is 1) which fits into a short exact sequence of the form
\[1 \xrightarrow{} P \to {}_p \widetilde G \rightarrow G \rightarrow 1\]
\sloppy where $P$ is a free pro-$p$ subgroup contained in the Frattini of ${}_p\widetilde G$ and such that $\Phi^i(P)/\Phi^{i+1}(P) \cong M_i$ for $i\geq 0$. Here, $\Phi^0(P) = P$, $\Phi(P) = P^p[P,P]$ and $\Phi^i(P) = \Phi(\Phi^{i-1}(P))$. Notice that these are all characteristic subgroups of $P$, so they are normal in ${}_p\widetilde G$. In particular, if $\dim_{\mathbb F_p} A_p(G_0) = n$, then $P$ can be taken as the pro-$p$ completion $\widehat {F}_p$ of the free abstract group on $n$ generators, and so the inverse limit can be written as
\[\widehat F_p = \varprojlim \widehat F_p/\Phi^i(\widehat F_p) \cong \varprojlim F/\Phi^i(F).\]
In particular, any Frattini extension of $G$ with kernel a pro-$p$ group is an epimorphic image of ${}_p \widetilde G$ \cite[Proposition 25.7.1, Corollary 25.12.2]{FJ}. More details about the universal Frattini $p$-cover can be found in \cite[Section 25.12]{FJ}; its connection with $p$-Frattini modules is, for instance, in \cite[$\S$3]{semmen}.

We will also need the following notion. A finite group $G$ is \emph{$p$-supersolvable} for a prime $p$ if $G/O_{p',p}(G)$ is abelian with exponent dividing $p-1$. We will make repeated use of the following theorem due to Griess--Schmid, so we write it down for the reader's convenience.

\begin{thm}[{{\cite[Theorem 3]{GS78}}}]\label{t:cent_frattini}
    Let $G$ be a finite group whose order is divisible by $p$ and let $M_0=A_{p}(G)$. Then either $C_G(M_0) = O_{p'}(G)$ or $G$ is $p$-supersolvable with cyclic Sylow $p$-subgroups; the latter happens if and only if $\dim_{\mathbb F_p}M_0=1$ (and so $C_G(M_0)=O_{p',p}(G)$).
\end{thm}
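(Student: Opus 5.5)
The statement is \cite[Theorem 3]{GS78}, quoted from the literature. Were I to reprove it, I would use the cohomological description $M_0=A_p(G)=\ker\varepsilon_1=\Omega^2(\mathbb F_p)$, the second Heller translate of the trivial $\mathbb F_pG$-module, and study it by restricting to subgroups.

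\textbf{Step 1: $O_{p'}(G)\le C_G(M_0)$.} Put $N=O_{p'}(G)$. Since $p\nmid |N|$, projective $\mathbb F_p[G/N]$-modules inflate to projective $\mathbb F_pG$-modules. Hence the minimal projective resolution of $\mathbb F_p$ over $G/N$, inflated to $G$, is a projective resolution over $G$. It should also remain minimal, because projective covers are preserved: the radical of $\mathbb F_pG$ contains the augmentation ideal of $N$ only in the appropriate sense, i.e.\ $\mathbb F_pG\cdot e_N\cong \mathbb F_p[G/N]$ with $e_N$ the central idempotent averaging $N$. Thus $\Omega^2_G(\mathbb F_p)$ is inflated from $G/N$ and $N$ acts trivially on it.

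\textbf{Step 2: the key restriction argument.} Let $K=C_G(M_0)$ and suppose $K$ is not a $p'$-group. Choose a nontrivial $p$-subgroup $Q\le K$. By Schanuel's lemma and the fact that restriction preserves projectivity,
\[
M_0\!\downarrow_Q\;\cong\;\Omega^2_Q(\mathbb F_p)\oplus(\text{projective}).
\]
The left-hand side is a trivial $\mathbb F_pQ$-module. A trivial module for a nontrivial $p$-group has no nonzero projective summand, so $\Omega^2_Q(\mathbb F_p)=M_0\!\downarrow_Q$ is trivial. It is also indecomposable, hence $\Omega^2_Q(\mathbb F_p)\cong\mathbb F_p$. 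This makes the trivial module periodic of period dividing $2$, which forces $Q$ to be cyclic (generalized quaternion groups have period $4$). Moreover $\dim M_0=\dim\Omega^2_Q(\mathbb F_p)=1$.

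\textbf{Step 3: the one-dimensional case.} Now suppose $\dim M_0=1$ (either from Step 2 or as a hypothesis). Restricting to a Sylow $p$-subgroup $S$ gives $\Omega^2_S(\mathbb F_p)\oplus(\text{proj})$ of dimension $1$. This forces $\Omega^2_S(\mathbb F_p)\cong\mathbb F_p$, since it is nonzero as $p\mid |G|$, so $S$ is cyclic by periodicity. Since $G$ acts on the line $M_0$ through a character into $\mathbb F_p^\times$, the quotient $G/C_G(M_0)$ is abelian of exponent dividing $p-1$.

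It remains to identify $C_G(M_0)=O_{p',p}(G)$, which yields $p$-supersolvability. For this I would use that $G$ is $p$-nilpotent-by-cyclic because its Sylow subgroups are cyclic (Burnside transfer). Then I would compare with the explicit periodic resolution for groups with cyclic Sylow $p$-subgroups. Conversely, if $G$ is $p$-supersolvable with cyclic Sylow $p$-subgroups, the same explicit resolution shows $\dim M_0=1$.

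\textbf{Main obstacle.} I expect two points to be the delicate ones. The first is the minimality and inflation claim in Step 1. The second is the exact identification of $C_G(M_0)$ with $O_{p',p}(G)$ in the cyclic case, as opposed to merely containing it. For the latter I would first try a direct analysis of $\Omega^2$ for $G=O_{p'}(G)\rtimes(\text{cyclic }p)\cdot(\text{cyclic }p')$, reducing modulo $O_{p'}(G)$ by Step 1.
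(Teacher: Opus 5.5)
The paper gives no proof of this statement; it is quoted from \cite[Theorem 3]{GS78}, so your sketch has to stand on its own. Steps 1 and 2 are sound and give the dichotomy: either $C_G(M_0)=O_{p'}(G)$, or $\dim M_0=1$. In the second case, the first half of Step 3 correctly shows that the Sylow $p$-subgroups are cyclic and that $G/C_G(M_0)$ is abelian of exponent dividing $p-1$. Step 1 does not need minimality at all. Since $e_N$ is a central idempotent, $N$ acts trivially on the inflated resolution, and the minimal $\Omega^2_G(\mathbb F_p)$ is a direct summand of the second kernel of any projective resolution. Drop the remark about the augmentation ideal of $N$ lying in the radical: $\mathbb F_pG(1-e_N)$ is a complementary ideal, not part of the radical.

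The genuine gap is the inclusion $C_G(M_0)\le O_{p',p}(G)$, that is, the $p$-nilpotence of $C=C_G(M_0)$. The reverse inclusion is the easy one. Your proposed justification fails. Cyclic Sylow $p$-subgroups do not make a group $p$-nilpotent-by-cyclic, nor even $p$-solvable: take $A_5$ with $p=5$. Burnside's transfer theorem needs $N(S)=C(S)$, which is not automatic. Moreover, $p$-solvability is essentially the whole content here: a $p$-solvable group with cyclic Sylow $p$-subgroups is automatically $p$-supersolvable in the paper's sense, by Hall--Higman. Your fallback, analysing $O_{p'}(G)\rtimes(\text{cyclic})\cdot(\text{cyclic})$, presupposes exactly this structure.

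The missing idea is to rerun your Step 2 restriction argument on $C$ itself.
\begin{enumerate}
\item Since $|G:C|$ divides $p-1$, $C$ contains a Sylow $p$-subgroup $S$ of $G$. So $M_0\downarrow_C$ is one-dimensional, trivial and non-projective, whence $\Omega^2_C(\mathbb F_p)\cong\mathbb F_p$.
\item By dimension shifting, $H^1(C,\mathbb F_p)\cong\widehat H^{-1}(C,\mathbb F_p)\cong\mathbb F_p\neq 0$.
\item As $S$ is abelian, Burnside's fusion theorem and the focal subgroup theorem give $S\cap C'=[S,N_C(S)]$.
\item Coprime action of $N_C(S)/C_C(S)$ on the directly indecomposable group $S$ forces either $N_C(S)=C_C(S)$ or $S=[S,N_C(S)]\le C'$. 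The latter contradicts $H^1(C,\mathbb F_p)\neq0$.
\item Hence Burnside's transfer theorem makes $C$ $p$-nilpotent. By Step 1, $O_{p'}(C)=O_{p'}(G)$, so $C=O_{p'}(G)S=O_{p',p}(G)$.
\end{enumerate}
With this inserted, together with your converse via the explicit resolution for $S\rtimes T$ after factoring out $O_{p'}(G)$, the argument is complete.
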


A consequence of the above is that, when $G$ is $p$-supersolvable with cyclic Sylow $p$-subgroups, the kernel of ${}_p\widetilde G \to G$ is isomorphic to $\mathbb Z_p$; otherwise, it is a non-abelian free pro-$p$ group.

The following is hinted at, without proof, in \cite{CKK}.

\begin{proposition}\label{p:comp_factor}
    Let $H$ be a finite group with order divisible by $p$, let $_p \widetilde H$ be its universal $p$-Frattini cover and assume that $H$ is not $p$-supersolvable. If an irreducible $\mathbb F_pH$-module $S$ is centralised by $O_{p'}(H)$, then it is $H$-isomorphic to a chief factor of a finite quotient of $_p \widetilde H$ lying in one of the $M_n$.
\end{proposition}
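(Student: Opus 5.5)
We work in the finite quotient $G_n = {}_p\widetilde H/\Phi^{n+1}(P)$ of ${}_p\widetilde H$ and show that the kernel $K_n=\bigcap_{i\le n}C_H(M_i)$ of the action of $H$ on all the layers $M_0,\dots,M_n$ shrinks to $O_{p'}(H)$. Once that holds, $S$ is a module for $\bar H = H/O_{p'}(H)$, a group acting faithfully on a suitable direct sum of the $M_i$. A classical fact then finishes the argument: every irreducible $\mathbb F_p \bar H$-module is a composition factor of a tensor power (or even a direct summand of a sufficiently large tensor algebra piece) of a faithful module, in the version where $O_p(\bar H)=1$. Since $O_p(H/O_{p'}(H))$ may be nontrivial, we first need to take care of it.

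The steps, in order, are as follows.

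\textbf{Step 1.} Since $H$ is not $p$-supersolvable, \cref{t:cent_frattini} gives $C_H(M_0)=O_{p'}(H)$. This already provides a faithful $\mathbb F_p\bar H$-module $M_0$ with $\bar H = H/O_{p'}(H)$.

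\textbf{Step 2.} The irreducible $\mathbb F_p\bar H$-modules are exactly the irreducibles of $\bar H/O_p(\bar H)$, because $O_p$ acts trivially on every irreducible module in characteristic $p$. By a theorem of Brauer–Burnside type (the Steinberg/Brauer version: if $V$ is a faithful $\mathbb F_p X$-module, then every irreducible $\mathbb F_pX$-module is a composition factor of some $V^{\otimes k}$), every such $S$ occurs as a composition factor of $M_0^{\otimes k}$ for some $k\ge 1$. The case $S$ trivial is handled separately, for instance as a factor of $\mathbb F_p$-tensor powers, or directly via abelianization layers.

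\textbf{Step 3.} Next, relate tensor powers to the layers $M_n=\Phi^n(P)/\Phi^{n+1}(P)$ of the free pro-$p$ group $P$. For a free pro-$p$ group, the $p$-commutator map induces $H$-equivariant surjections onto sections of the higher layers from $\Lambda^2$ and from Frobenius twists. More precisely, $\Phi(P)/\Phi^2(P)$ maps onto a module involving $P/\Phi(P)\otimes P/\Phi(P)$ modulo symmetric terms. Iterating with the standard Lazard/Zassenhaus filtration, every composition factor of $M_0^{\otimes k}$ (or of the free Lie algebra component of degree $k$ on $M_0$, which contains tensor-power composition factors up to multiplicity when we use the restricted free Lie algebra) appears as a composition factor of some $M_n$. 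Here the hypothesis that $H$ is not $p$-supersolvable is used again: $\dim M_0\ge 2$, so $P$ is non-abelian free, and the free Lie algebra on $M_0$ has all the needed homogeneous components. Finally, any composition factor of $M_n$ as an $H$-module is a chief factor of the finite quotient ${}_p\widetilde H/\Phi^{n+1}(P)$ lying in $M_n$. This holds because $\Phi^n(P)/\Phi^{n+1}(P)$ is a normal elementary abelian section on which the group acts through $H$.

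The main obstacle is Step 3: identifying the layers $M_n$ with the graded pieces of a free (restricted) Lie algebra on $M_0$, and checking that every composition factor of $M_0^{\otimes k}$ survives there. If that comparison is awkward, the first alternative to try is to replace the tensor-power argument with a direct one. This works inductively: the group $G_n$ acts faithfully, modulo $O_{p'}$, on $P/\Phi^{n+1}(P)$. Any irreducible $\bar H$-module appears in the regular module $\mathbb F_p\bar H$. By Gaschütz's description of $A_p(G_n)$ as $\ker\varepsilon_1$ of the minimal projective resolution of $G_n$, and since $P$ acts trivially on the relevant heads, the projective covers of all irreducible $\bar H$-modules eventually appear inside some $M_n$, which yields $S$.
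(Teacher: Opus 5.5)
\textbf{There is a genuine gap in Step 3.} Your Steps 1 and 2 are sound. Steinberg's theorem (a faithful module has a faithful tensor algebra) holds over any field, so $O_p(\bar H)$ needs no special care. But Step 3, which you flag as the main obstacle, carries the whole content of the proposition, and it is not proved.

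The claim that every composition factor of $M_0^{\otimes k}$ reappears in some higher layer does not follow from the free (restricted) Lie algebra structure. The homogeneous Lie pieces are submodules of the tensor powers, not the other way round. PBW only expresses $M_0^{\otimes k}$ through truncated symmetric powers of Lie pieces, not as a section of a single Lie piece. For instance, for $p$ odd the degree-two piece is $\Lambda^2M_0$, which need not contain the factors of $S^2M_0$.

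That some Lie layer nevertheless contains every irreducible $\bar H$-module is a deep result, and it is exactly what the paper imports from Bryant--Kov\'acs. Let $F$ be free of rank $n=\dim M_0$, where $n\ge 2$ by \cref{t:cent_frattini}. Then for all large $i$ the lower exponent-$p$ central factors $F_i/F_{i+1}$, viewed as $\mathbb F_p\mathrm{GL}_n(\mathbb F_p)$-modules, contain a regular module. Restricting to $\bar H\le \mathrm{GL}_n(\mathbb F_p)$ gives a free $\mathbb F_p\bar H$-module, which contains every irreducible. With this input your tensor-power detour becomes unnecessary; without it, or a genuine substitute, the argument has a hole.

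Your fallback via Gasch\"utz's description is too vague to fill the gap. $M_n\cong\Omega^2(\mathbb F_p)$ over $\mathbb F_pG_n$ has no projective summands, and nothing explains why projective covers of $\bar H$-modules should occur as sections.

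There are also two bookkeeping errors in Step 3.
\begin{enumerate}
\item The layers $M_n=\Phi^n(P)/\Phi^{n+1}(P)$ of the iterated Frattini series are not the homogeneous components of the free restricted Lie algebra on $M_0$. Those components are the factors $P_i/P_{i+1}$ of the lower exponent-$p$ central series. Already $\dim M_1=1+p^d(d-1)$ by Schreier, with $d=\dim M_0$, while $P_2/P_3$ is a proper quotient of $M_1$.
\item ${}_p\widetilde H$ acts on $M_n$ through $G_n$, not through $H$.
\end{enumerate}

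The correct route runs as follows. The factor $P_i/P_{i+1}$ is central in $P/P_{i+1}$, hence an $H$-module. Its structure is the one induced from $H\to\mathrm{GL}(M_0)$, because the graded action is determined in degree one. Since $\Phi^i(P)\le P_{i+1}$, Jordan--H\"older in ${}_p\widetilde H/\Phi^i(P)$ moves a chief factor found in $P_i/P_{i+1}$ to an isomorphic one lying in some $M_n$. That chief factor is an $H$-module because the normal $p$-subgroup $P/\Phi^i(P)$ centralises every chief factor below it.
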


\begin{proof} 
    Suppose $S$ is an irreducible $\mathbb F_pH$-module centralised by $O_{p'}(H)$ and let $A=A_p(H)$. By assumption and by \cref{t:cent_frattini}, the dimension of $A$ is greater than 1 and $C_H(A)=O_{p'}(H)$, so there is an embedding $\bar{H}=H/O_{p'}(H) \hookrightarrow \mathrm{GL}_n(\mathbb F_p)=\Sigma$ with $n=\dim_{\mathbb F_p}A$. The aim is to use the results in \cite{BK}, so we place ourselves within that context and follow their notation. For any group $G$, define $G_1 = G$ and $G_i = G_{i-1}^p[G_{i-1},G]$. Let $F$ be the free abstract group on $n$ generators. Given that $F/F_2$ is an elementary abelian $p$-group of rank $n$, we can choose a $\Sigma$-action on it in a way that $A$ and $F/F_2$ are isomorphic as $\mathbb F_p\Sigma$-modules. Combining \cite[Theorem 2]{BK} and \cite[Theorem 3]{BK} and the discussion therein, each $F_i/F_{i+1}$ is a $\mathbb F_p\Sigma$-module and, for sufficiently large $i$, say $i \geq i_0$, they contain a regular $\mathbb F_p \Sigma$-module. Since $\bar{H}$ is a subgroup of $\Sigma$, this means that for $i \geq i_0$ they also contain a regular $\mathbb F_p\bar{H}$-module. Now, by the construction above of the universal $p$-Frattini cover of $H$, we have
    \[1 \rightarrow \widehat{F}_p \rightarrow {}_p\widetilde H \rightarrow H \rightarrow 1\]
    and we consider the quotient $H_{i_0+1} = {}_p\widetilde H/\Phi^{i_0+1}(\widehat F_p)$. Notice that for each $i\leq i_0$, we have that $(\widehat{F}_p)_{i}/\Phi^{i_0+1}(\widehat{F}_p) \unlhd H_{i_0+1}$, and $(\widehat F_p)_{i_0}/(\widehat{F}_p)_{i_0+1} \cong F_{i_0}/F_{i_0+1}$, so the existence of a regular $\mathbb F_p \bar{H}$-module implies that $S$, being irreducible, is $H$-isomorphic to a chief factor of $H_{i_0+1}$.
\end{proof}

Notice that the chief factors of any finite quotient of ${}_p\widetilde H$ are also $H$-modules. Indeed, if $P = \ker ({}_p\widetilde H \to H)$, for any $i\geq 1$, any chief factor that appears `below' $P$ is going to be centralised by $P$, so it attains a natural ${}_p\widetilde H/P \simeq H$-module structure.

We end with a lemma. Denote by $\pi_n$ the map $H_n \to H$ coming from the inverse limit.

\begin{lemma}\label{l:o_p'}
    Suppose $H$ is a finite group whose order is divisible by $p$. Then $\pi_n(O_{p'}(H_n)) = O_{p'}(H)$ for each $n\geq 1$.
\end{lemma}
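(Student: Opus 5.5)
Here $H_n$ is the $n$-th finite quotient of ${}_p\widetilde H$ from the iterated construction, with $H_0=H$, $H_n=\widetilde{H}_{n-1}$, and $\pi_n\colon H_n\to H$ the composite projection. Its kernel $K_n=\ker\pi_n$ is a finite $p$-group: it is filtered by the images of $M_0,\dots,M_{n-1}$. It also lies in $\Phi(H_n)$, because $K_n$ is the image of the Frattini subgroup $P$ of ${}_p\widetilde H$, and images of Frattini subgroups under epimorphisms stay inside the Frattini subgroup. I would prove the two inclusions of $\pi_n(O_{p'}(H_n)) = O_{p'}(H)$ separately.

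The inclusion $\pi_n(O_{p'}(H_n))\leq O_{p'}(H)$ is immediate. The image of a normal $p'$-subgroup under an epimorphism is a normal $p'$-subgroup, so it lies in $O_{p'}(H)$.

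For the reverse inclusion, set $Q=\pi_n^{-1}(O_{p'}(H))$, which is normal in $H_n$. Since $K_n$ is a normal Sylow $p$-subgroup of $Q$, Schur--Zassenhaus gives a complement $R$ with $Q=K_nR$, $R\cong O_{p'}(H)$, and all complements conjugate in $Q$. The Frattini argument then yields $H_n=Q\,N_{H_n}(R)=K_n N_{H_n}(R)$. Because $K_n\leq\Phi(H_n)$, this forces $N_{H_n}(R)=H_n$. Hence $R$ is a normal $p'$-subgroup of $H_n$, so $R\leq O_{p'}(H_n)$, and $\pi_n(R)=O_{p'}(H)$ gives the reverse inclusion.

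The only point needing care is $K_n\leq\Phi(H_n)$, which also follows inductively: each step $H_{k}=\widetilde{H}_{k-1}$ is a Frattini extension, and the preimage of a Frattini subgroup under a Frattini extension is Frattini. The hypothesis that $p$ divides $|H|$ plays no essential role; when $p\nmid |H|$ all $M_k$ are trivial and the statement is trivial. As a byproduct, $O_{p'}(H_n)\cap K_n=1$, so $\pi_n$ maps $O_{p'}(H_n)$ isomorphically onto $O_{p'}(H)$.
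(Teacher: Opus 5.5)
Your proof is correct, but it takes a different route from the paper.

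\textbf{The paper's route.} The paper works one layer at a time. In $H_1\to H$ it takes a Schur--Zassenhaus complement $L$ of $M_0$ in the preimage $N$ of $O_{p'}(H)$. It then invokes the Griess--Schmid theorem (\cref{t:cent_frattini}) to get $O_{p'}(H)\le C_H(M_0)$. Hence $N=M_0\times L$, and $L$, being the unique Hall $p'$-subgroup of $N$, is normal in $H_1$. The case of general $n$ follows by induction applied to $H_n\to H_{n-1}$.

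\textbf{Your route.} You use only that $K_n=\ker\pi_n$ is a $p$-group contained in $\Phi(H_n)$. You replace the centraliser information by the Frattini argument, which treats all $n$ at once.

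\textbf{What your route buys.} It proves a more elementary and more general statement: for any finite $G$ with a normal $p$-subgroup $K\le\Phi(G)$, the projection maps $O_{p'}(G)$ isomorphically onto $O_{p'}(G/K)$. No hypothesis $p\mid |H|$ is needed; the paper needs it only because \cref{t:cent_frattini} requires it. Moreover, your $R$ and $K_n$ are both normal in $H_n$ with trivial intersection, so $[R,K_n]=1$. Taking $n=1$, this recovers $O_{p'}(H)\le C_H(M_0)$, which is exactly the inclusion the paper imports from Griess--Schmid. That inclusion is the easy half of the theorem. The deep half, which bounds $C_H(M_0)$ from above by $O_{p'}(H)$ or $O_{p',p}(H)$, plays no role in this lemma. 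The paper's version is shorter only because \cref{t:cent_frattini} is already available for later use.

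\textbf{One wording slip.} $P$ is contained in $\Phi({}_p\widetilde H)$, not equal to it. This does not affect the argument: your inductive justification via $\Phi(G/N)=\Phi(G)/N$ for $N\le\Phi(G)$ establishes $K_n\le\Phi(H_n)$ cleanly anyway.
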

\begin{proof}
    Consider the short exact sequence
    \[1 \rightarrow M_0 \rightarrow H_1 \rightarrow H \rightarrow 1\]
    and let $N/M_0 \unlhd H_1/M_0$ be such that $N/M_0 \cong O_{p'}(H)$. By Schur-Zassenhaus, it follows that $N=M_0 \rtimes L$, with $L \cong O_{p'}(H)$, and  $O_{p'}(H)\leq C_H(M_0)$ by \cref{t:cent_frattini}, so the equality $N=M_0 \times L$ holds. The $p'$-subgroup $L$ is then normal in $H_1$, so $O_{p'}(H) = L^{\pi_1} \subseteq (O_{p'}(H_n))^{\pi_1}\subseteq O_{p'}(H)$ and the desired conclusion follows. The general case is proven by induction and the fact that $H_n$ is the universal $p$-Frattini extension of $M_{n-1}$ by $H_{n-1}$.
\end{proof}

\section{Monolithic groups}\label{sec:monolithic}

First, we deal with the easy case of monolithic groups with a non-abelian socle.

\begin{proposition}\label{p:mono_non-abelian}
	Let $G$ be a finite group with a unique minimal normal subgroup $N$. If $N$ is non-abelian, then $G$ is sectionally indecomposable.
\end{proposition}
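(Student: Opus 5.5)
By \cref{l:strong_indec_subdirect}, it suffices to treat a subdirect product $X \leq A \times B$ together with an epimorphism $\varphi\colon X \to G$, and to show that $G \leq_s A$ or $G \leq_s B$. Let $\pi_A,\pi_B$ be the two projections, and put $K_A = \ker(\pi_A|_X) = X \cap (1\times B)$ and $K_B = \ker(\pi_B|_X) = X\cap (A\times 1)$. Both are normal in $X$, they intersect trivially, and they commute elementwise, so $[K_A,K_B]=1$. Their images $\varphi(K_A)$ and $\varphi(K_B)$ are normal subgroups of $G$ that centralise each other.

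The key fact is that, since $G$ is monolithic with non-abelian socle $N$, we have $C_G(N)=1$. Indeed, $C_G(N)$ is normal; if it were non-trivial it would contain $N$, and then $N$ would be abelian, a contradiction. It follows that any two non-trivial normal subgroups of $G$ both contain $N$, so they cannot centralise each other. Hence $\varphi(K_A)=1$ or $\varphi(K_B)=1$.

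Suppose, without loss of generality, that $\varphi(K_A)=1$, that is, $K_A \leq \ker\varphi$. Then $\varphi$ factors through $X/K_A \cong \pi_A(X)=A$, because $X$ is subdirect. So $G$ is an epimorphic image of $A$, and in particular $G\leq_s A$. The case $\varphi(K_B)=1$ gives $G\leq_s B$ in the same way.

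There is no real obstacle here. The only point that needs care is the commutation step, namely that elements of $X\cap(A\times 1)$ and of $X\cap(1\times B)$ commute in $A\times B$. Together with the observation $C_G(N)=1$, this shows that two normal subgroups of $G$ with trivial commutator cannot both be non-trivial.
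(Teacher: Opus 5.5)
Your proof is correct and follows essentially the same route as the paper: you reduce to a subdirect product via \cref{l:strong_indec_subdirect}, note that the two projection kernels centralise each other, and conclude that one of them lies in $\ker\varphi$ because a monolithic group with non-abelian socle has no two non-trivial mutually centralising normal subgroups. Your argument also states explicitly the fact $C_G(N)=1$, which the paper's proof uses without stating it.
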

\begin{proof}
	Let $A$ and $B$ be finite groups and let $X \leq A \times B$ be a subdirect product such that $X/K \cong G$ for some $K \unlhd X$. Denote by $\pi_A$ and by $\pi_B$ the projections of $X$ onto $A$ and $B$, respectively, and let $Y_A = \ker \pi_A$ and $Y_B = \ker \pi_B$. It is not restrictive to assume that both $Y_A$ and $Y_B$ are non-trivial. Denote by $T/K$ the minimal normal subgroup of $X/K$ and suppose that $Y_A \not\leq K$. In that case, it follows that $T/K \leq Y_AK/K$. Since $Y_A$ and $Y_B$ commute in $X$, we have $Y_BK/K \leq C_{X/K}(Y_AK/K)$. Since $Y_BK/K \leq C_{X/K}(Y_AK/K)$,  which implies that $Y_B \leq K$ and thus that $G \leq_s B$.  Thus $G$ is sectionally indecomposable by \cref{l:strong_indec_subdirect}.
\end{proof}

Henceforth, let $X$ be a monolithic primitive group, and assume that $A=\soc(X)$ is abelian. Then $X\cong A\rtimes H,$ where $H$ is an irreducible subgroup of $\aut(A).$ We want to determine under which conditions $X$ is sectionally indecomposable.

If $H=1,$ then $X$ is cyclic of prime order and $X$ is clearly sectionally indecomposable. So we may assume $H\neq 1.$ 
To simplify our arguments, the following definition will be useful.

\begin{definition}\label{defequiv} Let $H$ and $G$ be finite groups and let $A$ and $B$ be an $H$-module and a $G$-module, respectively. We say that $B$ is \emph{$H$-equivalent} to $A$ if there exist group isomorphisms $\alpha: B\to A$ and $\beta: G/C_G(B)\to H$ such that $(b^g)^\alpha=(b^\alpha)^{(gC_G(B))^\beta}$ for every $b\in B$
and $g\in G.$  In that way, $B$ can be seen as an $H$-module which is $H$-isomorphic to $A$.
\end{definition}

The motivation for \cref{defequiv} is clarified by the following lemma.

\begin{lemma}\label{eqandcf}Let $X=A \rtimes H$, with $A$ a faithful irreducible $H$-module, let $G$ be a finite group and let $B$ be an irreducible $G$-module.  Then $L_B\cong X$ if and only if $B$ is $H$-equivalent to $A$. In particular, $G$ admits an epimorphic image isomorphic to $X$ if and only if $G$ has a non-Frattini chief factor $H$-equivalent to $A$.
\end{lemma}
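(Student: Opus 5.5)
We prove the first equivalence directly from the definitions and then deduce the second from the crown-theoretic description of $L_B$ recalled above.

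\textbf{$L_B\cong X$ implies equivalence.} Recall $L_B=B\rtimes G/C_G(B)$ with $G/C_G(B)$ acting faithfully and irreducibly on $B$. Suppose $\varphi\colon L_B\to X$ is an isomorphism. Since $A$ is a faithful irreducible $H$-module, $X$ is monolithic with $\soc(X)=A$. Likewise $L_B$ is monolithic with socle $B$, because $B$ is a faithful irreducible $G/C_G(B)$-module. Hence $\varphi$ maps $B$ onto $A$; set $\alpha=\varphi|_B$. It induces an isomorphism $\bar\varphi\colon L_B/B\to X/A$. Compose $G/C_G(B)\cong L_B/B$ with $\bar\varphi$ and with the canonical isomorphism $X/A\cong H$; call the result $\beta$. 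For $b\in B$ and $g\in G$, conjugation by $gC_G(B)$ in $L_B$ is carried by $\varphi$ to conjugation by $\varphi(gC_G(B))$. Because $A$ is abelian, this conjugation depends only on the coset modulo $A$, i.e.\ on $(gC_G(B))^\beta\in H$. This is exactly the compatibility $(b^g)^\alpha=(b^\alpha)^{(gC_G(B))^\beta}$.

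\textbf{Equivalence implies $L_B\cong X$.} Conversely, given $\alpha,\beta$ as in \cref{defequiv}, define $\varphi(b\,gC_G(B))=b^\alpha\,(gC_G(B))^\beta$. This is a homomorphism of semidirect products precisely because the compatibility condition says $\alpha$ intertwines the two actions. It is bijective since $\alpha$ and $\beta$ are.

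\textbf{The ``in particular''.} Suppose first that $G$ maps onto $X$, with kernel $N$. Then $G/N$ has the minimal normal subgroup $A$, which is complemented in $G/N$ by a copy of $H$. So $G$ has a chief factor $B=M/N$ corresponding to $A$, and it is complemented, hence non-Frattini. The $G$-action on $B$ factors through $X$, and $C_G(B)/N$ corresponds to $C_X(A)=A$ since $A$ is self-centralising. Therefore $G/C_G(B)\cong H$, compatibly with the actions, so $B$ is $H$-equivalent to $A$.

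Conversely, suppose $G$ has a non-Frattini chief factor $B=M/N$ that is $H$-equivalent to $A$. Since $B$ is abelian and non-Frattini, it is complemented, so $G$ has a complemented chief factor $G$-isomorphic to $B$. By the fact recalled in the crowns subsection, $L_B$ is then an epimorphic image of $G$. By the first part, $L_B\cong X$.

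The only delicate step is the identification of socles in the first part, which uses the monolithicity of both $L_B$ and $X$. Everything else is bookkeeping.
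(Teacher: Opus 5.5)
Your proposal is correct and follows the paper's proof. The isomorphism $L_B\to X$ must carry the socle $B$ onto the socle $A$ and then induces $\beta$ on the quotients. Conversely, your map $b\,gC_G(B)\mapsto b^\alpha (gC_G(B))^\beta$ is exactly the paper's $\gamma$. The paper leaves the ``in particular'' implicit in the recalled crown-theoretic fact about $L_V$. Your details (the kernel of $G\to X$ yields a complemented chief factor; a non-Frattini abelian chief factor is complemented, which gives the converse) are the intended argument.
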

\begin{proof}
    First, assume that $B$ is $H$-equivalent to $A$. Given $\alpha\colon B\to A$ and
    $\beta\colon G/C_G(B)\to H$ as in \cref{defequiv}, we can define an isomorphism
    $\gamma\colon L_B=B\rtimes G/C_G(B)\to X$ by setting $(b,gC_G(B))^\gamma=b^\alpha\cdot(gC_G(B))^\beta.$

    Conversely, if $\gamma\colon L_B\to X$ is an isomorphism, then the restriction $\alpha$ of $\gamma$ to $B$ induces an isomorphism between $B=\soc(L_B)$ and $A=\soc(X)$. Moreover, for every $g\in G,$ there exists a unique $h\in H$ such that $(B,gC_G(B))^\gamma=(A,h)$. If we set $(gC_G(B))^\beta=h$, we obtain an isomorphism $\beta\colon G/C_G(B)\to H$ with the property  that $(b^g)^\alpha=(b^\alpha)^{(gC_G(B))^\beta}$ for every $b\in B$
    and $g\in G.$
\end{proof}

A \emph{proper section} of a group $G$ is either a proper quotient of $G$ or a quotient of any of its proper subgroups.

\begin{lemma}\label{preparo} Suppose we are under the hypotheses of \cref{eqandcf}. Assume that $G$ has a minimal normal subgroup $N$ that is $H$-equivalent to $A$, but no proper section of $G$ admits a chief factor $H$-equivalent to $A.$
Then either $G\cong X$ or $C_G(N)\leq \Phi(G).$
\end{lemma}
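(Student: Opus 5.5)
Set $C=C_G(N)$. Since $N$ is abelian and minimal normal, $N\leq C$ and $G/C\cong H$ through the equivalence. If $C\leq\Phi(G)$ there is nothing to prove, so assume $C\not\leq\Phi(G)$ and take a maximal subgroup $M$ of $G$ with $C\not\leq M$, so that $G=MC$. The aim is to show $G\cong X$.

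\emph{Step 1: $N$ has a complement.} Suppose first $N\leq M$. Then $M$ is a proper subgroup with $MC=G$, hence $M/(M\cap C)\cong G/C\cong H$. Since $N\leq M\cap C\leq C_M(N)$, conjugation makes $N$ an irreducible $M$-module, and it is $H$-equivalent to $A$ via the same $\alpha$ and the induced $\beta\colon M/C_M(N)\to H$; in fact $C_M(N)=M\cap C$. Refining $1\leq N$ to a chief series of $M$ then gives $M$, a proper section of $G$, a chief factor $H$-equivalent to $A$, which is excluded. Therefore $N\not\leq M$, so $G=MN$. Because $N$ is abelian, $M\cap N$ is normal in $MN=G$; minimality of $N$ and $N\not\leq M$ force $M\cap N=1$. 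Thus $G=N\rtimes M$.

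\emph{Step 2: $C=N$.} We have $C=N(M\cap C)$, and $M\cap C$ is normalised by $M$ and centralised by $N$, hence normal in $G$. Suppose $M\cap C\neq1$ and choose a minimal normal subgroup $K$ of $G$ inside it. Then $K\cap N=1$, and $NK/K$ is a chief factor of $G/K$ that is $G$-isomorphic to $N$, so it is still $H$-equivalent to $A$. Since $G/K$ is a proper quotient of $G$, this is a forbidden chief factor, a contradiction. Hence $M\cap C=1$, $C=N$, and $M\cong G/N$, which surjects onto $G/C\cong H$ with trivial kernel, so $M\cong H$.

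\emph{Step 3: identify $G$ with $X$.} Now $G=N\rtimes M$ with $M\cong G/C_G(N)$ acting faithfully, so $G\cong N\rtimes G/C_G(N)=L_N$. By \cref{eqandcf}, $L_N\cong X$, and therefore $G\cong X$.

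The main obstacle is Step 1, specifically checking that the $H$-equivalence passes to $M$, that is, that $M/C_M(N)$ maps isomorphically onto $H$ compatibly with $\alpha$. This holds because $C_M(N)=M\cap C$ and $MC=G$. The other steps only use the minimality hypotheses on proper subgroups and proper quotients.
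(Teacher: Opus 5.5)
Your proof is correct and rests on the same two ingredients as the paper's. A proper subgroup that supplements $C_G(N)$ and contains $N$ would have $N$ as a forbidden chief factor. A proper quotient by a normal subgroup meeting $N$ trivially would keep $NK/K$ as a forbidden chief factor. The difference is only in bookkeeping. You split on whether a maximal subgroup $M$ with $MC_G(N)=G$ contains $N$, then build $G\cong L_N$ by hand. The paper instead splits on whether $N\le\Phi(G)$: in one case it passes through a non-Frattini chief factor between $N$ and $C_G(N)$, and in the other it uses \cref{eqandcf} together with the proper-quotient hypothesis to kill the kernel of $G\to X$.
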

\begin{proof}
	First, we claim that $N$ is the unique minimal normal subgroup of $G$. Indeed, suppose that $M$ is another minimal normal subgroup of $G$; then $[M,N]=1$, so $M \leq C_G(N)$. Now, let $\alpha\colon N \to A$ and $\beta: G/C_G(N) \to H$ be as in \cref{defequiv}. Define $\tilde{\alpha}\colon NM/M \to A$ by $(nM)^{\tilde{\alpha}} = n^\alpha$ and $\tilde{\beta}\colon (G/M)/(C_{G/M}(NM/M)) \to H$ by $(gM\cdot C_{G/M}(NM/M))^{\tilde{\beta}} = (gC_G(N))^\beta$. Then it can be easily verified that $NM/M$ is a chief factor of $G/M$ which is $H$-equivalent to $A$.
	
If $N \not\leq \Phi(G)$, then, by \cref{eqandcf}, $X$ is an epimorphic image of $G$. Since no proper section of $G$ admits a chief factor $H$-equivalent to $A$, the kernel of the epimorphism $G \to X$ must be trivial, and therefore $G \cong X$.

It remains to prove that if $N \leq \Phi(G)$, then $C_G(N) \leq \Phi(G)$.
Assume, by contradiction, that $C_G(N) \not\leq \Phi(G)$. Then there exists a non-Frattini chief factor $U/V$ of $G$ with $N \leq V < U \leq C_G(N)$. There exists a supplement $K/V$ of $U/V$ in $G/V$. Then $N$ is a minimal normal subgroup of $K$, and we may define an isomorphism $\tilde{\beta}\colon  K/C_K(N) \to H$ by setting $(kC_K(N))^{\tilde{\beta}} = (kC_G(N))^\beta$. Then $(n^k)^\alpha = (n^\alpha)^{(kC_K(N))^{\tilde{\beta}}}$ for all $n \in N$ and $k \in K$. Thus, $N$ is a chief factor of $K$, which is $H$-equivalent to $A$, in contradiction with the assumption that no proper section of $G$ admits a chief factor $H$-equivalent to $A$.
\end{proof}

\begin{definition}\label{frath}We say that an $H$-module $A$ is \emph{$H$-Frattini} if there exist a finite group $G$ and a normal subgroup $N$ of $G$ with the following properties:
\begin{enumerate}
\item $N$ is $H$-equivalent to $A$;
\item $C_G(N)\leq \Phi(G).$
\end{enumerate}
\end{definition}

\begin{thm}\label{t:char-frath}
    Let $X=A\rtimes H$ be a monolithic primitive group with abelian socle $A$, and assume $H\neq 1.$ Then $X$ is sectionally indecomposable if and only if $A$ is not a Frattini $H$-module.
\end{thm}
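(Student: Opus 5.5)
We prove both implications separately, using \cref{l:strong_indec_subdirect}, \cref{eqandcf} and \cref{preparo}.

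\emph{($A$ is $H$-Frattini $\Rightarrow$ $X$ not sectionally indecomposable).} Take $G$ and $N\unlhd G$ as in \cref{frath}, with $G$ of minimal order. Put $C=C_G(N)\leq\Phi(G)$ and consider the subgroup
\[
Y=\{(g_1,g_2)\in G\times G : g_1C=g_2C\}.
\]
This is a subdirect product of $G\times G$ containing $N\times N$. Set $K=\{(n,n):n\in N\}(C\times C)\cap\ldots$, or more simply work modulo $\mathrm{diag}(N)$, in the spirit of \cref{l:H2}. The quotient $(N\times N)/\mathrm{diag}(N)\cong N$ is normal in $Y/\mathrm{diag}(N)$. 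We want it complemented, with the action of $Y$ on it being that of $G/C\cong H$. To obtain this, choose instead
\[
Y=\mathrm{diag}(G)(N\times N)\quad\text{and}\quad K=\mathrm{diag}(C)\supseteq\mathrm{diag}(N).
\]
Here $\mathrm{diag}(C)$ is normal in $Y$ because $N\times 1$ centralizes $C$ modulo $N$? This does not hold in general, so the safer choice is
\[
Y=\mathrm{diag}(G)(N\times 1),
\]
quotiented by $\mathrm{diag}(C)$. Indeed $\mathrm{diag}(C)$ is normalized by $\mathrm{diag}(G)$, and $N\times 1$ commutes with $\mathrm{diag}(C)$ modulo elements of $N\times N$. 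One then checks that $Y/\mathrm{diag}(C)\cong N\rtimes G/C\cong X$, where $N\times 1$ meets $\mathrm{diag}(C)$ trivially and $\mathrm{diag}(G)/\mathrm{diag}(C)$ complements it. The action agrees with that of $H$ on $A$ by $H$-equivalence, i.e.\ \cref{eqandcf} in the version with $L_N$.

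It then remains to show that $X\not\leq_s G$ after minimizing $G$. Suppose $X\leq_s G$. Then some subgroup $U\leq G$ has a non-Frattini chief factor $H$-equivalent to $A$ (\cref{eqandcf}). Replace $(G,N)$ by a minimal section of $G$ containing an $H$-equivalent chief factor. \cref{preparo} then forces either the section to be $X$ itself or to have $C\leq\Phi$. The delicate point is to argue that $G$ minimal with the Frattini property cannot have $X$ as a section. I expect to choose $G$ minimal among all groups having \emph{some} section with a normal subgroup $H$-equivalent to $A$ whose centralizer is Frattini. A section equal to $X$ would then have to be a proper section, and I would need to reconcile this with the construction above, perhaps by passing to a proper section of $G$ that still realises the Frattini condition. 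This bookkeeping is the first obstacle.

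\emph{(Not sectionally indecomposable $\Rightarrow$ $A$ is $H$-Frattini).} By \cref{l:strong_indec_subdirect} take $Y\leq A_1\times A_2$ subdirect, with $Y/K\cong X$ and $X\not\leq_s A_i$, and choose everything of minimal order. Let $Y_i=\ker\pi_i$; both are nontrivial and $Y_1\cap Y_2=1$. Since $X\not\leq_s A_i$, we get $Y_i\not\leq K$, so the socle $T/K$ of $Y/K$ lies in $Y_1K/K\cap Y_2K/K$. As $Y_1$ and $Y_2$ commute, this forces $Y_1K/K$ and $Y_2K/K$ to centralize $T/K$, hence to lie in $C_{Y/K}(T/K)=T/K$ (primitivity, abelian socle). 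Therefore $Y_1K=Y_2K=T$.

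Now use crown theory. $X$ is an epimorphic image of $Y$, so the $A$-crown of $Y$ is nontrivial. Since $R_Y(A)\leq K$ and $Y_1R_Y(A)/R_Y(A)$, $Y_2R_Y(A)/R_Y(A)$ are normal subgroups of the crown $C_Y(A)/R_Y(A)\cong A^{\delta}$, the quotients $A_i\cong Y/Y_i$ carry the image of this crown. Because $X\not\leq_s A_i$, every chief factor of $A_i$ that is $H$-equivalent to $A$ is Frattini, by \cref{eqandcf}. Now pick a chief factor of $A_1$ $H$-equivalent to $A$; one exists since $T/K$ is a quotient of $Y_2\cong Y_2Y_1/Y_1\unlhd A_1$. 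Pass to a minimal section $G$ of $A_1$ admitting such a factor as a minimal normal subgroup $N$. By \cref{preparo}, either $G\cong X$, which contradicts $X\not\leq_s A_1$, or $C_G(N)\leq\Phi(G)$, so $A$ is $H$-Frattini.

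The step I expect to be hardest is ensuring that a minimal section has its chief factor at the bottom. For this I would take $G$ a minimal section of $A_1$ having some chief factor $H$-equivalent to $A$, and quotient by the largest normal subgroup below that factor; minimality then makes that factor a minimal normal subgroup, so \cref{preparo} applies.
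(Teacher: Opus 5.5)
There is a genuine gap, in the implication ($A$ is $H$-Frattini $\Rightarrow$ $X$ is not sectionally indecomposable). The embedding $X\le_s G\times G$ is useful only once you know $X\not\le_s G$, and you leave exactly this open as ``bookkeeping''. Your proposed minimisation of $|G|$ does not supply it. Given $R/T\cong X$ with $R\le G$, nothing forces some proper section of $G$ to satisfy \cref{frath}. In the case $R=G$ you would still have to explain why $G/T\cong X$ is incompatible with $C_G(N)\le\Phi(G)$.

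The missing idea is that no minimality is needed, because $C=C_G(N)\le\Phi(G)$ is nilpotent. Let $R/T\cong X$ be any section of $G$ and put $U/T=\soc(R/T)$. Then $C\cap R$ is a nilpotent normal subgroup of $R$, so $C\cap R\le\mathrm{Fit}(R)\le C_R(U/T)$, since the Fitting subgroup centralises every chief factor. Hence $|H|=|R:C_R(U/T)|\le|R:R\cap C|=|RC:C|\le|G:C|=|H|$. This forces $G=RC$, hence $G=R$ because $C\le\Phi(G)$, and also $C_G(U/T)=C$. Since $C_{R/T}(U/T)=U/T$, we get $U=C\le\Phi(G)$. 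This contradicts the fact that $U/T$ is complemented in $G/T$.

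Incidentally, your worry about the normality of $\mathrm{diag}(C)$ is unfounded. $N\times 1$ centralises $\mathrm{diag}(C)$ because $C=C_G(N)$. Moreover $\mathrm{diag}(G)(N\times N)=\mathrm{diag}(G)(N\times 1)$, so your two candidate subgroups coincide. You also need $N\le C$, which holds because $N$ is abelian.

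The converse implication is correct, and somewhat leaner than the paper's argument. From $Y_1K=Y_2K=T$ you get that $Y_2^{\pi_1}/(Y_2\cap K)^{\pi_1}$ is a chief factor of $A_1$. Since $Y_1$ centralises $Y_2$, the $Y$-action on $Y_2/(Y_2\cap K)\cong_Y T/K$ factors through $A_1$, so this factor is $H$-equivalent to $A$. A section of $A_1$ of minimal order having such a chief factor then satisfies the hypotheses of \cref{preparo}, exactly as you say. The paper reaches the analogous normal subgroup $K_1^{\pi_2}$ of $Y_2$ only after crown-theoretic normalisations ($V=R_U(A)$, $K_i\cap R_U(A)=1$). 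Your quotient by $Y_2\cap K$ avoids these, and your remarks about the crown are not needed.
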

\begin{proof}
First, we prove that if $A$ is a Frattini $H$-module, then $X$ is not sectionally indecomposable. Take $G$ as in \cref{frath} and let $C=C_G(N)$. For every $U\leq G,$
we define $${\rm{diag}}(U)=\{(u,u)\mid u\in U\}\leq G^2.$$
Consider $$Y=\frac{N^2 {\rm{diag}}(G)}{{\rm{diag}}(C)}.$$
Let $$M=\frac{{\rm{diag}}(C)N^2}{{\rm{diag}}(C)},\quad K=\frac{{\rm{diag}}(G)}{{\rm{diag}}(C)}.$$ Clearly $K\cong G/C \cong H$, $M\cong N$, $MK=Y$
and $M\cap K=1.$ Moreover, if $m=(n,1){\rm{diag}}(C)$ and $k=(g,g){\rm{diag}}(C),$ then $m^k=(n^g,1){\rm{diag}}(C),$ so $M$ is $H$-equivalent to $A$
and therefore $Y=M\rtimes K \cong X.$ 

We want to prove that no section of $G$ can be isomorphic to $X$. Assume by contradiction that $R/T$ is a section of $G$ with $R/T\cong X$. In particular  $U/T=\soc(R/T)$ is a chief factor of $R$ and  $R/C_R(U/T)\cong H.$ Since $C\leq \Phi(G)$ is nilpotent,
$C\cap R\leq {\rm{Fit}}(R)$ and therefore, by \cite[5.2.9]{rob},
$C\cap R\leq C_R(U/T).$
This implies
$$|H|=\left|\frac{R}{C_R(U/T)}\right|\leq \left|\frac{R}{R\cap C}\right|=\left|\frac{RC}{C}\right|\leq \left|\frac{G}{C}\right|=|H|.$$
In particular $G=RC$ and $C_R(U/T)=C\cap R$. However, since $C\leq \Phi(G),$ from $G=RC$ we deduce that $G=R$ and $C_G(U/T)=C.$ Since $U\leq C$ and $|G/U|=|G/C|=|H|,$ we deduce $U=C\leq \Phi(G)$ in contradiction with the fact that $U/T$ is complemented in $G/T.$ We have thus proved that $X$ is not sectionally indecomposable.

Conversely, assume that $X$ is not sectionally indecomposable. 
So there exist $Y_1, Y_2$ such that $X$ is neither a section 
of $Y_1$ nor of $Y_2,$ yet there exists a section $U/V$ of 
$Y_1\times Y_2$ with $U/V\cong X.$ For $i\in\{1,2\},$ 
consider the projection $\pi_i\colon Y_1\times Y_2\to Y_i.$ 
We choose $U\leq Y_1\times Y_2$ minimal with respect to 
the property that $U/V\cong X$ for some normal subgroup 
$V$ of $U.$ Without loss of generality we may assume 
$U^{\pi_1}=Y_1$ and $U^{\pi_2}=Y_2.$ Let $K_1=\ker \pi_1\cap U$ 
and $K_2=\ker \pi_2\cap U.$ It must be that $K_i\neq 1$, for $i\in\{1,2\},$
otherwise $U\cong U^{\pi_i}$ would be a subgroup of $Y_i$ with a section isomorphic to $X$.
We may identify $A$ with a chief 
factor of $U$. It follows that $V$ contains $R=R_U(A).$ 
In fact, the minimality of $U$ ensures that $V=R,$ and consequently $A\cong C/R,$ with $C=C_U(A).$
Since $(K_1\cap R)\times (K_2\cap R)$ is a normal subgroup 
of $Y_1\times Y_2$ contained in $V,$ without loss of 
generality we may assume $K_1\cap R=K_2\cap R=1.$ It follows that, for $i\in\{1,2\},$ $K_iR/R$ is a nontrivial normal subgroup of $U/R$, and therefore $C\leq K_iR.$ Since $[K_1,K_2]=1,$
$K_2\leq C_U(RK_1/R)\leq C_U(C/R)=C.$
Similarly $K_1\leq C.$
In other words, $C=R\times K_1=R\times K_2,$ and $K_1\cong_U K_2\cong_U A$
(indeed $A\cong_U C/R \cong_U RK_i/R\cong_U K_i,$ for $1\leq i\leq 2)$.
Recall that $U/R=C/R \rtimes L/R$ for a suitable subgroup $L$ of $U,$ and
$L/R\cong H.$ Now consider $J:=(K_1L)^{\pi_2}.$ First, notice that $\tilde K:=K_1^{\pi_2}\cong K_1\cong_U A$ (indeed $K_1^{\pi_2}\cong_U K_1K_2/K_2\cong_U K_2$). Moreover $C_J(\tilde K)=C^{\pi_2}$ and $J/C^{\pi_2}\cong K_1K_2L/K_2C=U/C\cong H.$
Therefore we have shown that $U$ has an epimorphic image containing a minimal normal subgroup which is $H$-equivalent to $A$. We can consider an epimorphic image of $U$ that is as small as possible with respect to this property. This epimorphic image satisfies the assumptions of \cref{preparo}, so it is either a split extension of $A$ by $H$ or $A$ is $H$-Frattini.
\end{proof}

\section{The main theorem}\label{sec:main}

Let $H$ be a finite group. While the definition of $H$-Frattini modules is needed to prove the main technical theorem of the last section, it is unclear how to check whether, for a given $H$-module $A$, there exists a finite group $G$ which satisfies the conditions of \cref{frath}. The aim of this section is to address this difficulty, and we will do so by means of the $p$-Frattini modules. The notation for this section follows the one defined in \cref{subsec:fratti_mod}.

\begin{proposition}\label{p:o_p'_trivial}
    Suppose that $A$ is a faithful irreducible non-trivial $H$-Frattini module. If $A$ is a $p$-group, then $O_{p^\prime}(H)=1.$ In particular, $H$ is not solvable.
\end{proposition}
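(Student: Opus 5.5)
Take $G$ and $N\unlhd G$ as in \cref{frath}, so $N$ is $H$-equivalent to $A$ and $C=C_G(N)\leq\Phi(G)$; in particular $G/C\cong H$. The strategy is to lift $O_{p'}(H)$ to a $p'$-subgroup of $G$ and show it centralises $N$. That forces it into $C$, hence into the nilpotent group $\Phi(G)$, and this will collapse it. Faithfulness of $A$ gives $C_H(A)=1$, so it suffices to prove that $O_{p'}(H)$ acts trivially on $A$, equivalently that the preimage of $O_{p'}(H)$ in $G$ centralises $N$.

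Let $D/C=O_{p'}(G/C)$ be the subgroup corresponding to $O_{p'}(H)$. Since $C\leq\Phi(G)$, $C$ is nilpotent; write $C=C_p\times C_{p'}$ with $C_p$ its Sylow $p$-subgroup. The $p'$-part $C_{p'}$ is characteristic in $C$, hence normal in $G$. Because $C$ centralises $N$, we have $N\leq Z(C)$, and $N$ is a $p$-group, so $N\leq C_p$.

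Pass to $\bar G=G/C_{p'}$. There $\bar C=C_pC_{p'}/C_{p'}$ is a $p$-group and $\bar D/\bar C$ is a $p'$-group. By Schur--Zassenhaus, $\bar D=\bar C\rtimes\bar Q$ for a $p'$-subgroup $\bar Q$. The Frattini argument gives $\bar G=\bar C\,N_{\bar G}(\bar Q)$. Since $\bar C\leq\Phi(\bar G)$ (images of Frattini subgroups lie in the Frattini subgroup), we get $\bar G=N_{\bar G}(\bar Q)$, so $\bar Q\unlhd \bar G$.

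Now $\bar Q$ and $\bar C$ are both normal in $\bar G$ with coprime orders, so $[\bar Q,\bar C]=1$. In particular $\bar Q$ centralises the image $\bar N$ of $N$, which is isomorphic to $N$ as a module because $N\cap C_{p'}=1$. Hence $D$ centralises $N$ modulo $C_{p'}$. Since $C_{p'}$ centralises $N$ and the action on $N$ factors through $G/C$, the preimage of $\bar Q$ centralises $N$. This gives $D\leq C$, so $O_{p'}(H)=1$.

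The point needing the most care is the last step, transporting centralisation from $\bar G$ back to $G$. One needs $C_{\bar G}(\bar N)$ to pull back to $C_G(N)$, which holds because $C_{p'}\leq C$ and $[N,C_{p'}]\leq N\cap C_{p'}=1$.

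For the final claim, suppose $H$ were solvable. Then $O_{p'}(H)=1$ gives $O_p(H)\neq 1$, since $H\neq 1$ ($A$ is non-trivial). But $O_p(H)$ acts trivially on every irreducible $\mathbb F_pH$-module, so by faithfulness $O_p(H)=1$, a contradiction. Hence $H$ is not solvable.
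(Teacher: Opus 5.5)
Your proof is correct, and it follows a genuinely different route from the paper's.

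\textbf{The paper's route.} The paper first quotients by $O_{p'}(\Phi(G))$ to make $C_G(N)$ a $p$-group. It then realises $G$ as an epimorphic image of a finite layer $H_n$ of the universal $p$-Frattini cover ${}_p\widetilde H$. It invokes \cref{l:o_p'}, whose proof rests on the Griess--Schmid theorem \cref{t:cent_frattini}, to obtain a normal $p'$-subgroup of $G$ mapping onto $O_{p'}(H)$. That subgroup commutes with the normal $p$-subgroup $C_G(N)$, hence lies in it, and so $O_{p'}(H)=1$.

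\textbf{Your route.} You produce the same normal $p'$-lift directly, using Schur--Zassenhaus and the Frattini argument in $G/O_{p'}(C)$; from there the endgame is identical. This works for any Frattini extension with $p$-group kernel.

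\textbf{What each buys.} Your argument is elementary and self-contained. It needs no profinite groups and no Griess--Schmid, and the first claim uses neither $O_p(H)=1$ nor irreducibility of $A$. It also makes explicit the reduction modulo the $p'$-part, which the paper only calls ``not restrictive''. It supplies the argument for the ``in particular'' clause, which the paper leaves unstated. Your Frattini-argument step in effect reproves $O_{p'}(H)\le C_H(M)$ for every Frattini $p$-extension, which is the only part of \cref{t:cent_frattini} that \cref{l:o_p'} uses. The paper's route instead keeps this direction inside the universal-cover framework that is genuinely needed for the converse in \cref{p:frat-mod_o-p'}.

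\textbf{A small wording point.} Pulling $C_{\bar G}(\bar N)$ back to $C_G(N)$ rests on $[N,g]\le N\cap C_{p'}=1$ for every $g$ in the preimage, which holds by normality of $N$. The justification $[N,C_{p'}]=1$ you cite is not the relevant fact. You already have $N\cap C_{p'}=1$, so this is only phrasing.
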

\begin{proof}
Let $G$ be a finite group satisfying the conditions of \cref{frath}. While the extension of $C_G(N)$ by $H$ is not a $p$-Frattini extension, the fact that $N$ is a $p$-group contained in the Frattini of $G$ means that we can quotient by $O_{p'}(\Phi(G))$. Thus, it is not restrictive to assume that $C_G(N)$ is a $p$-group. The properties of the universal Frattini $p$-cover of $H$ imply the existence of an integer $n$ such that the diagram
\[\begin{tikzcd}
	1 & {O_p(H_n)} & {H_n} & {H} & 1 \\
	1 & C_G(N) & G & {H} & 1
	\arrow[from=1-1, to=1-2]
	\arrow[from=1-2, to=1-3]
	\arrow[two heads, from=1-2, to=2-2]
	\arrow["{\pi_n}",from=1-3, to=1-4]
	\arrow["{\delta}", two heads, from=1-3, to=2-3]
	\arrow[from=1-4, to=1-5]
	\arrow[r,-,double equal sign distance,double, from=1-4, to=2-4]
	\arrow[from=2-1, to=2-2]
	\arrow[from=2-2, to=2-3]
	\arrow[from=2-3, to=2-4]
	\arrow[from=2-4, to=2-5]
\end{tikzcd}\]
commutes; here, $\ker(H_n \to H) = O_p(H_n)$
 because $O_p(H)=1$. This implies $[(O_{p'}(H_n))^\delta,C_G(N)]=1$ and thus $(O_{p'}(H_n))^\delta \subseteq C_G(N)$. The commutativity and exactness of the diagram and the fact that $(O_{p'}(H_n))^{\pi_n}=O_{p'}(H)$ by \cref{l:o_p'} imply that $O_{p'}(H)=1$.
\end{proof}

\begin{proposition}\label{p:frat-mod_o-p'}
Let $A$ be a faithful irreducible $H$-module. Assume that $A$ is a $p$-group. Then $A$ is $H$-Frattini if and only if $O_{p^\prime}(H)=1.$
\end{proposition}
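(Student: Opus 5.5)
One direction is essentially already done: if $A$ is $H$-Frattini, then $O_{p'}(H)=1$ by \cref{p:o_p'_trivial}. (That proposition assumes $A$ non-trivial; when $H\neq 1$ a faithful irreducible module is automatically non-trivial. If $H=1$, then $O_{p'}(H)=1$ holds trivially.) So the work is in the converse. Assume $O_{p'}(H)=1$; we must build a finite group $G$ with a normal subgroup $N$ that is $H$-equivalent to $A$ and satisfies $C_G(N)\leq\Phi(G)$. The natural candidates are the finite quotients $H_n={}_p\widetilde H/\Phi^n(P)$ of the universal $p$-Frattini cover, where $P=\ker({}_p\widetilde H\to H)$.

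\textbf{Step 1: dispose of small cases.} Since $A$ is faithful and irreducible over $\mathbb F_p$, we have $O_p(H)=1$. If $p\nmid |H|$, then $O_{p'}(H)=H$, so $O_{p'}(H)=1$ forces $H=1$. In that case $A$ is the trivial module of order $p$, and a cyclic group $G$ of order $p^2$ with $N=\Phi(G)$ works. So assume $p$ divides $|H|$. Next suppose $H$ is $p$-supersolvable. Then $O_{p',p}(H)=O_p(H)=1$, so $H$ is abelian of exponent dividing $p-1$, hence a $p'$-group, which contradicts $p\mid |H|$. Thus $H$ is not $p$-supersolvable. By \cref{t:cent_frattini}, $C_H(M_0)=O_{p'}(H)=1$, and $P$ is a non-abelian free pro-$p$ group.

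\textbf{Step 2: locate $A$ inside some $H_n$.} Since $O_{p'}(H)=1$ centralises $A$ trivially, \cref{p:comp_factor} applies: $A$ is $H$-isomorphic to a chief factor $U/V$ of some finite quotient $Q$ of ${}_p\widetilde H$, with $U/V$ lying in some layer $M_i$. Replacing $Q$ by $Q/V$ (still a quotient of ${}_p\widetilde H$, hence still a Frattini extension of a $p$-group by $H$), we get a finite group $G:=Q/V$ with a minimal normal subgroup $N:=U/V\leq K:=\ker(G\to H)$. Here $K$ is a $p$-group contained in $\Phi(G)$, since the whole kernel is Frattini. Because the action of $G$ on $N$ factors through $G/K\cong H$ and $N\cong_H A$ with $A$ faithful, we get $C_G(N)\geq K$ and $G/C_G(N)\cong H/C_H(A)=H$. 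Hence $C_G(N)=K$ and $N$ is $H$-equivalent to $A$.

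\textbf{Step 3: conclude.} We now have $C_G(N)=K\leq\Phi(G)$, so $A$ is $H$-Frattini. The main obstacle is Step 2. One must check that the chief factor produced by \cref{p:comp_factor} really sits \emph{below} the kernel $P$, so that $G$ acts on it through $H$, and that passing to $Q/V$ keeps the kernel Frattini. The first point is built into the statement (``lying in one of the $M_n$''). The second holds because images of Frattini subgroups under epimorphisms lie in the Frattini subgroup of the image. The delicate input, namely that every irreducible module centralised by $O_{p'}(H)$ appears in the layers, is exactly \cref{p:comp_factor}, which requires $H$ not to be $p$-supersolvable; this is why Step 1 is needed.
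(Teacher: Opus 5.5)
\textbf{Verdict: correct except in the degenerate case $H=1$, where your witness fails and the statement itself is false.}

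For $H\neq 1$ your argument is correct and is essentially the paper's, with more detail:
\begin{enumerate}
\item[(i)] rule out $p$-supersolvability using $O_{p'}(H)=O_p(H)=1$;
\item[(ii)] invoke \cref{p:comp_factor} to realise $A$ as a chief factor $U/V$ inside a layer of a finite quotient $Q$ of ${}_p\widetilde H$;
\item[(iii)] pass to $G=Q/V$ and use faithfulness to get $C_G(N)=\ker(G\to H)\leq\Phi(G)$.
\end{enumerate}
You supply details the paper leaves implicit, namely the passage to $Q/V$ and the identification $C_G(N)=K$. One small correction: $K$ centralises $N$ not because of the wording ``lying in one of the $M_n$''. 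The real reason is that a normal $p$-subgroup acts trivially on an irreducible $\mathbb F_pG$-module, so $K\leq O_p(G)\leq C_G(N)$.

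The step that fails is your treatment of $H=1$. For $G=C_{p^2}$ and $N=\Phi(G)$ you have $C_G(N)=G$, which is not contained in $\Phi(G)=N$, so condition (2) of \cref{frath} fails. In fact no witness can exist. If $H=1$, then $H$-equivalence forces $G/C_G(N)\cong 1$, i.e.\ $C_G(N)=G$. Then $C_G(N)\leq\Phi(G)$ forces $G=1$, contradicting $|N|=|A|=p$. So for $H=1$ the module $A$ is \emph{not} $H$-Frattini even though $O_{p'}(H)=1$. The ``if'' direction is false in that case, and the proposition has to be read under the standing assumption $H\neq 1$ of that section.

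The paper's proof also uses $H\neq 1$ tacitly. It asserts that $O_p(H)=O_{p'}(H)=1$ makes $H$ non-$p$-supersolvable, but the trivial group is $p$-supersolvable, and \cref{p:comp_factor} needs $p\mid |H|$. The fix is to delete your $H=1$ paragraph and assume $H\neq 1$. Then $O_{p'}(H)=1$ gives $p\mid|H|$, and the rest of your proof goes through unchanged.
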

\begin{proof}By \cref{p:o_p'_trivial}, if $A$ is $H$-Frattini, then $O_{p^\prime}(H)=1.$

Conversely, assume that $O_{p^\prime}(H)=1.$ Recall that a group $G$ is $p$-supersolvable if and only if $G/O_{p^\prime}(G)$ has a normal $p$-Sylow subgroup such that the quotient is abelian of exponent dividing $p-1$. Since $A$ is a faithful $H$-module, $O_p(H)=1.$ Combined with $O_{p^\prime}(H)=1,$ this implies that $H$ is not  $p$-supersolvable. It follows from \cref{p:comp_factor} that $A$ is $H$-isomorphic to a chief factor in $M_n$ for some $n$; the fact that $A$ is a faithful $H$-module in turn implies that $A$ is $H$-Frattini.
\end{proof}

This yields a concrete criterion to establish whether a monolithic primitive group with abelian socle is sectionally indecomposable.

\begin{corollary}\label{c:strong_indec_o-p'}
Let $X=A\rtimes H$ be a monolithic primitive group with abelian socle $A$, and assume $H\neq 1.$ Then $X$ is sectionally indecomposable if and only if $A$ is a $p$-group and $O_{p^\prime}(H)\neq 1.$ In particular, if $H$ is solvable, then $X$ is sectionally indecomposable.
\end{corollary}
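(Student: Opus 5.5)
The plan is to combine \cref{t:char-frath} with \cref{p:frat-mod_o-p'}. Since $A=\soc(X)$ is abelian and minimal normal, it is elementary abelian, so $A$ is a $p$-group for some prime $p$; thus the clause ``$A$ is a $p$-group'' in the statement is automatic, and the real content is the condition on $O_{p'}(H)$. Primitivity and monolithicity of $X=A\rtimes H$ give that $A$ is a faithful irreducible $\mathbb F_pH$-module. Faithfulness holds because $C_X(A)=A$: if $C_H(A)\neq 1$, it would be normalised by $H$ and centralised by $A$, hence normal in $X$, and would contain a second minimal normal subgroup. Irreducibility follows from the minimality of $A$. So the hypotheses of \cref{p:frat-mod_o-p'} are satisfied.

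By \cref{t:char-frath}, $X$ is sectionally indecomposable if and only if $A$ is not $H$-Frattini. By \cref{p:frat-mod_o-p'}, $A$ is $H$-Frattini if and only if $O_{p'}(H)=1$. Chaining these two equivalences, $X$ is sectionally indecomposable if and only if $O_{p'}(H)\neq 1$, which is the main claim.

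For the solvable case, assume $H\neq 1$ is solvable. Its Fitting subgroup is then nontrivial, and since $A$ is faithful we have $O_p(H)=1$ (a standard fact for faithful irreducible modules over $\mathbb F_p$). Hence $F(H)$ is a nontrivial $p'$-group, so $O_{p'}(H)\neq 1$, and the first part gives sectional indecomposability. Alternatively, this is the final sentence of \cref{p:o_p'_trivial}, which says that an $H$-Frattini module of this kind forces $H$ to be non-solvable.

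There is no real obstacle once the earlier results are assumed. The only point needing care is verifying $O_p(H)=1$, which is used both here and inside \cref{p:frat-mod_o-p'}. This is the classical fact that $O_p(H)$ acts trivially on every irreducible $\mathbb F_pH$-module: the fixed points of a $p$-group on a nonzero $\mathbb F_p$-module are nonzero, and they form an $H$-submodule when the $p$-group is normal. Faithfulness then forces $O_p(H)=1$.
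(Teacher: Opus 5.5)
Your proposal is correct and follows essentially the same route as the paper. There the corollary is obtained by chaining \cref{t:char-frath} with \cref{p:frat-mod_o-p'}, using that $A$ is a faithful irreducible $\mathbb F_pH$-module. The solvable case follows from $F(H)\neq 1$ together with $O_p(H)=1$, which is the observation recorded in the final sentence of \cref{p:o_p'_trivial}.
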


\section{Concluding remarks and open questions}\label{sec:final}

\subsection{Cohomology and irreducible modules}
Let $A$ be a faithful irreducible $H$-module. Notice that, if $O_{p'}(H) \neq 1$, it can be seen from the five-term exact sequence that $H^2(H,A)=0$, while \cref{l:H2m} indicates that if the second cohomology group is non-trivial, then the group $A \rtimes H$ is not sectionally indecomposable as long as there exists a non-split extension of $A$ by $H$ in the cohomology group which is not isomorphic to $A \rtimes H$. The following example shows that the second cohomology group being trivial is not a sufficient condition for sectional indecomposability.

We use the notation in \cite[Example 1]{GS78}. Let $H=A_5$ be the alternating group of degree 5 and let $M_0=A_2(H)$. There exist 3 irreducible $\mathbb F_2 H$-modules up to isomorphism: $\mathbb F_2$, $X$ and $P$ of dimensions 1, 4 and 4, respectively. The group $G=X \rtimes H$ is a monolithic primitive group with abelian socle and $O_{p'}(H)=1$, so $G$ is not sectionally indecomposable by \cref{c:strong_indec_o-p'}. However, from the discussion in \cite[Example 1]{GS78}, the head of $A_2(H)$ (i.e., its quotient by the radical) is isomorphic to $\mathbb F_2$, whereas the socle is isomorphic to $X$. Then \cite[Lemma 3]{GS78} on one hand implies $H^1(H,X)$ and $H^2(H,\mathbb F_2)$ are non-trivial, and on the other that $H^1(H,\mathbb F_2)$ and $H^2(H,X)$ are trivial. Furthermore, if we had considered instead $L=P \rtimes H$, then $L$ is not sectionally indecomposable and $H^1(H,P) = H^2(H,P)=0$; this illustrates the need for \cref{p:comp_factor}, since $P$ is not isomorphic to any composition factors of $M_0$, but it will be so for some $M_n$ with $n> 0$.

\subsection{On non-split extensions and $p$-groups}\label{pgruppi}

The methods presented here fall short in the attempt of proving any general statement in the context of non-primitive monolithic groups. In general, if $G$ is a monolithic group with abelian socle $A$ which does not split over $A$, one could still hope that it splits over some other abelian normal subgroup, say, $G=M \rtimes H$, in the hopes of applying \cref{l:H2}. However, even if $H^2(H,M) \neq 0$, it is not clear whether there is at least one 2-cocycle which defines a non-split extension $E$ that is not isomorphic to $G$. This prompts the following natural question, which is open to the best of our knowledge.

\begin{question}\label{q:H2}
    Let $G$ be a finite group and suppose $G=M \rtimes H$ with $M$ abelian. If $H^2(H,M) \neq 0$, does there exist a non-split extension $E$ of $M$ by $H$ corresponding to a non-trivial 2-cocycle in $H^2(H,M)$ which is not isomorphic to $G$?
\end{question}

A particular manifestation of the above is the case of $p$-groups. A $p$-group is monolithic if, and only if, its center is cyclic, and furthermore it is clearly not split over its socle. Even though we can make no general statement about the sectional indecomposability of $p$-groups, we offer some examples of $p$-groups which do not enjoy this property.

For a positive integer $n$, denote by $D_{2^n}$ and by $Q_{2^n}$ the dihedral and the generalised quaternion groups of order $2^n$, respectively. Let $n \geq 3$. Since $D_{2^n}\cong C_{2^{n-1}}\rtimes C_2,$ and $Q_{2^n}$ is a non-split extension of $C_{2^{n-1}}$ by $C_2,$ it follows from \cref{l:H2} that $D_{2^n}$ is not sectionally indecomposable.
For $G=Q_{2^n}$, consider the following construction. If
\[H=D_{2^n}=\langle x,y \mid x^{2^{n-1}}=1=y^2,\, x^y=x^{-1}\rangle\,\text{ and }
X=C_4=\langle a \mid a^4=1\rangle\]
then $G$ is neither a section of $H$ nor a section of $X.$ Take $U \leq H \times X$ generated by $(x,1)$ and $(y,a)$ and $K=\langle (x^{2^{n-2}},a^2)\rangle \unlhd U$. One can see that $U/K$ is non-abelian of order $2^n$ and $(x^{2^{n-2}},1)K$ is the unique element of order 2 in $U/K$, hence $U/K \cong Q_{2^n}.$

\medskip

Next, consider the non-abelian groups of order $p^3$ for an odd prime $p$ given by $$\begin{aligned}G_1& = (C_p\times C_p)\rtimes C_p=\langle x,y,z\mid x^p=y^p=z^p=1, [x,y]=1, x^z=xy, y^z=y\rangle,\\
G_2 &= C_{p^2} \rtimes C_p=\langle u,v\mid u^{p^2}=1, v^p=1, u^v=u^{p+1}\rangle.\end{aligned}$$
Note that $G_2$ contains a normal subgroup $N=\langle u^p, v\rangle \cong C_p\times C_p$, which is neither complemented nor central,  so $G_2$ is a non-split extension of $N$ by $C_p$ which gives a non-trivial class in $H^2(C_p,C_p \times C_p)$, so \cref{l:H2} applies and $G_1$ is not sectionally indecomposable. For $G_2$, we resort to a specific construction. Let $A=C_{p^2} = \langle a \rangle$, and  consider the subgroup $U$ of $A\times G_1$ generated by the 3 elements $\alpha = (a^p,y)$, $\beta = (a,x)$, $\gamma =(1,z)$. One can see that $\langle \alpha \rangle \leq Z(U)$ and $U/\langle \alpha \rangle$ is a non-abelian $p$-group of order $p^3$ containing an element of order $p^2$; it follows thus that $G_2 \cong U/\langle \alpha \rangle$, so $G_2$ is not sectionally indecomposable.

\medskip

Let $p$ be a prime number and let $k$ be a positive integer. We will denote the group $C_p \wr C_{p^k}$ by $W_{p,k}$; it can be equivalently rewritten as $\mathbb F_pC_{p^k} \rtimes C_{p^k}$. By \cite[Theorem 8.5]{B}, it holds that $H^2(C_{p^k},\mathbb F_pC_{p^k}) = 0$, and one can prove that the only splitting of $W_{p,k}$ as a semidirect product of a normal abelian subgroup and a complement is the one given above, so there is no hope of applying \cref{l:H2}.

\begin{thm}\label{t:cpwrcp}
For each be prime number $p$ and each positive integer $k$, the group $W_{p,k}$ is not sectionally indecomposable.
\end{thm}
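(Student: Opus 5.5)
The plan is to imitate the constructions just given for $Q_{2^n}$ and $G_2$. We exhibit two explicit groups $A$ and $B$ and a subgroup $U\leq A\times B$ with a central subgroup $K$ of order $p$ such that $U/K\cong W_{p,k}$. We then check separately that $W_{p,k}$ is a section of neither $A$ nor $B$. Since $H^2(C_{p^k},\mathbb F_pC_{p^k})=0$, \cref{l:H2} is unavailable, so the ``twisting'' must be spread over the two factors and cancelled by the diagonal central quotient.

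Write $M=\mathbb F_pC_{p^k}$, $t$ for the generator of the top group, and $z$ for the generator of $M^{C_{p^k}}$. Note that $z=\soc(W_{p,k})=Z(W_{p,k})$ has order $p$.

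A first naive attempt fails, and it shows what must be avoided. Take $E=\langle M,s\rangle$ with $s$ acting as $t$ and $s^{p^k}=z$, take $B=C_{p^{k+1}}=\langle c\rangle$, take $U=\langle (M,1),(s,c)\rangle$, and take $K=\langle (z,c^{p^k})\rangle$. Then $U/K\cong W_{p,k}$, since $|U/K|=|W_{p,k}|$ and the image of $(s,c)$ has order $p^k$ and meets the image of $M$ trivially. However, $(ms)^{p^k}=\varepsilon(m)z\cdot z$, where $\varepsilon$ is the augmentation, so $E$ splits and $E\cong W_{p,k}$. The lesson is that the twisting must occur in the base, not only in the top.

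Hence I would take for $A$ a lift of the base to $\mathbb Z/p^2$ in the spirit of $G_2$. Concretely, set
$$A=\bigl((\mathbb Z/p^2)C_{p^k}/pI\bigr)\rtimes\langle t\rangle,$$
where $I$ is the augmentation ideal, so that the base is abelian of type $C_{p^2}\times C_p^{\,p^k-1}$ with $p\cdot(\text{base})$ equal to a copy of $\langle z\rangle$. For $B$ I would take a quotient or subgroup of $W_{p,k}$ of smaller order, or $W_{p,k-1}\times C_{p^{k}}$ if needed. The subgroup $U$ is generated by elements of the form $(\tilde m,\bar m)$ and $(t,t')$, in analogy with the generators $\alpha,\beta,\gamma$ in the $G_2$ example. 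The subgroup $K$ is the diagonal copy of $\langle (p\tilde e,\,\cdot)\rangle$ that identifies the extra $C_{p^2}$-layer with a central element on the $B$-side.

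The verification splits into two parts. The first part is $U/K\cong W_{p,k}$. This is done by computing orders, showing the image of the base is elementary abelian and regular as a $C_{p^k}$-module, and finding an element of order $p^k$ complementing it. The second part is $W_{p,k}\not\leq_s A$ and $W_{p,k}\not\leq_s B$. For $B$ this should follow from order considerations. For $A$, note that $|A|=p|W_{p,k}|$, so a section isomorphic to $W_{p,k}$ would be either a maximal subgroup or a quotient by a central subgroup of order $p$. One then rules these out with invariants. The invariants to use are: the structure of the unique abelian normal subgroup of index $p^k$ (\cite[Theorem 8.5]{B} and the uniqueness of the splitting noted above), its exponent, and the fact that the centre of $W_{p,k}$ has order $p$ and lies in $[W,W]$.

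The main obstacle is choosing $A$ and $B$ so that both parts hold simultaneously. The twist must be strong enough that $A$ has no section isomorphic to $W_{p,k}$; in particular, no element of the form $mt$ may split off as in the failed attempt. Yet it must be exactly cancellable by one diagonal central subgroup. I would first test the construction for $k=1$, where $W_{p,1}=C_p\wr C_p$ is of maximal class, and then generalise.
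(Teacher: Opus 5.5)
There is a genuine gap. The proposal never actually constructs $B$, $U$ and $K$: you leave $B$ open and name the choice of $A,B$ as the unresolved obstacle. Worse, the one concrete choice you make cannot work. Put $R=(\mathbb Z/p^2)C_{p^k}$. The map $r\mapsto pr$ induces an isomorphism $R/(I+pR)\cong pR/pI=p\cdot(R/pI)$. Hence $p\cdot(\text{base})$ is a trivial module of order $p$, so it is central in $A$, and
\[
A/(pR/pI)\cong (R/pR)\rtimes\langle t\rangle\cong \mathbb F_pC_{p^k}\rtimes C_{p^k}=W_{p,k}.
\]
So $W_{p,k}$ is an epimorphic image of $A$. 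It is exactly the ``quotient by a central subgroup of order $p$'' you planned to exclude with invariants. This is structural, not a matter of fine-tuning. Whenever $A=\tilde M\rtimes C_{p^k}$ and $\tilde M$ has a submodule $Z_0$ of order $p$ with $\tilde M/Z_0\cong\mathbb F_pC_{p^k}$, you get $A/Z_0\cong W_{p,k}$. Putting the twist in an extra layer on top of the base, so that $|A|=p|W_{p,k}|$, is the wrong direction.

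The missing idea is to twist the base \emph{without} enlarging it. The paper's $G_2$ comes from the presentation of $W_{p,k}$ by replacing $f_2^p=1$ with $f_2^p=f_{p^k+1}$. So the $p$-th power of the generator of the cyclic base module becomes the fixed-point generator $z$. The base is then $C_{p^2}\times C_p^{p^k-2}$ and $|G_2|=|W_{p,k}|$. Now $G_2$ and $W_{p,k}$ are two central extensions of $C_p$ by the same group $W_{p,k}/Z(W_{p,k})$, so the second factor can simply be $C_{p^2}=\langle a\rangle$. In $G_2\times C_{p^2}$, take
\[
H=\langle (f_1,1),(f_2,a),(f_3,1),\dots,(f_{p^k+1},1)\rangle ,
\]
which has order $p|W_{p,k}|$, and the central subgroup $K=\langle (f_2,a)^p\rangle=\langle (f_{p^k+1},a^p)\rangle$. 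The defining relations of $W_{p,k}$ hold in $H/K$, so comparing orders gives $H/K\cong W_{p,k}$.

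Equal orders are what make the rest easy. First, $W_{p,k}\not\leq_s G_2$ reduces to $G_2\not\cong W_{p,k}$. For $(p,k)\neq(2,1)$ your invariant settles this: the exponent of the abelian normal subgroups of index $p^k$. Second, $W_{p,k}\not\leq_s C_{p^2}$ is trivial. One caveat: for $p=2$, $k=1$ the construction degenerates, since $G_2\cong D_8\cong W_{2,1}$. There you use instead $D_8\leq_s Q_8\times Q_8$ from the dihedral example.
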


\begin{proof}
    Let $p$ be a prime number.  Define the following groups:
    \begin{align*}
    G_1 = \langle f_1, \ldots, f_{p^k+1} \mid\; & f_1^{p^k}=1,\ f_i^p = 1, \ \forall\, i>1 \\
    & [f_2,f_1]=f_3,\ [f_3,f_1]=f_4,\ \ldots,\ [f_{p^k},f_1]=f_{p^k+1}, \\
    & [f_{p^k+1}, f_1]=1, \\
    & [f_i,f_j]=1, \ \forall\, i>j>1 \rangle
    \end{align*}
    \begin{align*}
    G_2 = \langle f_1, \ldots, f_{p^k+1} \mid\; &  f_2^p = f_{p^k+1},\ f_1^{p^k}=1,\ f_i^p=1 \ \text{ for }i \notin \{1,2\}, \\
    & [f_2,f_1]=f_3,\ [f_3,f_1]=f_4,\ \ldots,\ [f_{p^k},f_1]=f_{p^k+1}, \\
    & [f_{p^k+1},f_1]=1, \\
    & [f_i,f_j]=1, \ \forall\, i>j>1 \rangle
    \end{align*}
    and let $X = \langle a \mid a^{p^2} = 1 \rangle \cong C_{p^2}$.
    
    In the direct product $G_2 \times X$, let $H$ be the subgroup generated by:
    \[
   h_1 = (f_1, 1), \quad h_2=(f_2, a), \quad h_3 =(f_3, 1), \quad \ldots, \quad h_{p^k+1}=(f_{p^k+1}, 1)
    \]
    and let $K = \langle (f_2^p,\, a^p) \rangle = \langle (f_{p^k+1},\, a^p) \rangle$.
    
    Our aim is to show the following claims:
    \begin{enumerate}
    \item $K \cong C_p$,
    \item $K \leq Z(H)$,
    \item $H/K \cong G_1$.
    \end{enumerate}

    Observe first that $G_1 \simeq C_p \wr C_{p^k} = C_p^{p^k} \rtimes C_{p^k}$ and $G_2 \simeq (C_{p^2} \oplus C_p^{p^k-2}) \rtimes C_{p^k}$, so $|G_1|=|G_2|=p^{p^k+k}$. Then one observes that $G_2$ will have no normal abelian subgroups of index $p^k$ of exponent $p$, so $G_1$ and $G_2$ will not be isomorphic.  Claim (1) and Claim (2) follow directly upon inspection of the presentation of $G_2$.

    To prove Claim (3), we proceed as follows. First, the order of $H$ is $p^{p^k+k+1}$; indeed, on one hand, $H$ is a proper subgroup of $G_2 \times X$, which is of order $p^{p^k+k+2}$, and on the other, we have $|H| > |G_2|=p^{p^k+k}$, so the assertion holds. Second, let $\bar{h}_i$ be the images of the $h_i$ in $H/K$. We want to check that the relations in the presentation of $G_1$ hold for the $\bar{h}_i$. All but one of the relations are straightforward consequences of the relations of $G_2$, which we highlight below. One has
    \begin{align*}
        \bar{h}_2^p=\overline{(f_2,a)^p}=\overline{(f_2^p,a^p)}
    \end{align*}
  which is the identity in $H/K$. Thus, we get a surjective homomorphism $G_1 \to H/K$ given by $f_i \mapsto \bar{h}_i$, which is an isomorphism in light of the fact that $|H|/|K|= p^{p^k+k+1}/p = |G_1|$.
\end{proof}

All the examples of non-sectionally indecomposable $p$-groups given above follow a similar blueprint: we find another group with equal order and a ``similar'' presentation, we take the direct product with a convenient cyclic group and then we quotient by a cyclic central subgroup. We suspect that there does \emph{not} exist a non-cyclic monolithic sectionally indecomposable $p$-group, but we do not know how to generalise the above mentioned strategy to any $p$-group.

\begin{question}\label{q:p-groups}
    Do non-cyclic monolithic sectionally indecomposable $p$-groups exist?
\end{question}

\subsection{Infinite groups}

We briefly discuss the notion of sectional indecomposability for infinite groups. The following gives a plethora of examples of sectional indecomposability.

\begin{proposition}
    Let $G$ be a infinite group.
    \begin{enumerate}[(i)]
        \item If $G$ is  has a non-abelian minimal normal subgroup and its centraliser in $G$ is trivial, then $G$ is sectionally indecomposable.
        \item If $G$ is countable and contains a non-abelian free subgroup, then $G$ is sectionally indecomposable.
    \end{enumerate}
\end{proposition}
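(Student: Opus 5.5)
Both parts follow the pattern of \cref{p:mono_non-abelian}, with \cref{l:strong_indec_subdirect} as the reduction (its proof uses no finiteness). So suppose $X\leq A\times B$ is a subdirect product, $K\unlhd X$ and $X/K\cong G$. Let $Y_A=X\cap\ker\pi_A$ and $Y_B=X\cap\ker\pi_B$; these are normal subgroups of $X$ that commute and meet trivially. It suffices to prove that $Y_A\leq K$ or $Y_B\leq K$. Indeed, if $Y_A\leq K$, then $G$ is a quotient of $X/Y_A\cong\pi_A(X)=A$, so $G\leq_s A$; the case $Y_B\leq K$ gives $G\leq_s B$ in the same way.

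For (i), let $N$ be the non-abelian minimal normal subgroup with $C_G(N)=1$, and identify $G$ with $X/K$. Suppose neither $Y_AK/K$ nor $Y_BK/K$ is trivial. These are commuting normal subgroups of $G$. The intersection $N\cap Y_AK/K$ is normal in $G$ and contained in $N$, so by minimality it is either $1$ or $N$. If it is $1$, then $[N,Y_AK/K]\leq N\cap Y_AK/K=1$, so $Y_AK/K\leq C_G(N)=1$, which is a contradiction. Hence $N\leq Y_AK/K$, and likewise $N\leq Y_BK/K$. But then $N$ is contained in two commuting subgroups, so $[N,N]=1$, contradicting that $N$ is non-abelian. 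This is the argument of \cref{p:mono_non-abelian} with the hypothesis $C_G(N)=1$ replacing uniqueness of the minimal normal subgroup, and I expect it to be routine.

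For (ii) I would try to transfer a structural property from free groups. A non-abelian free group $F$ has no non-trivial commuting pair of normal subgroups: if $1\neq M_1,M_2\unlhd F$ and $[M_1,M_2]=1$, then each $M_i$ is a non-trivial normal subgroup of a non-abelian free group, hence non-abelian free, and its centraliser is trivial. The obstacle is that the free subgroup $F$ sits inside the quotient $G=X/K$, while the commuting normal subgroups $Y_AK/K$ and $Y_BK/K$ need not intersect $F$ non-trivially. The plan is therefore to lift rather than project. Take the preimage $\widetilde F$ of $F$ in $X$ and use countability to choose a free lift $\widehat F\leq X$ mapping isomorphically onto $F$, which is possible because $F$ is free. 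Then $\widehat F$ projects into $A$ and into $B$. If $\widehat F\cap Y_A=1$, then $\pi_B$ is injective on $\widehat F$, so $F$ embeds in $B$. But this only yields the free group as a section of $B$, not $G$ itself.

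So the key step, and the one I expect to be hardest, is to pass from embedding the free subgroup to recovering all of $G$. Here I would use countability more strongly: a countable group is a quotient of the free group $F_\infty$ of countable rank, and $F_\infty$ embeds in $F_2$, which embeds in $F$. Hence $G$ is a section of any group containing a non-abelian free subgroup. It then suffices to show that $A$ or $B$ contains a non-abelian free subgroup. Take $a,b\in X$ whose images in $G$ generate a free subgroup; then $\langle a,b\rangle\leq X$ is free of rank 2. Its images under $\pi_A$ and $\pi_B$ have kernels $\langle a,b\rangle\cap Y_A$ and $\langle a,b\rangle\cap Y_B$, which are commuting normal subgroups of the free group $\langle a,b\rangle$ with trivial intersection. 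By the property of free groups above, one of them is trivial, so $\langle a,b\rangle$ embeds in $A$ or in $B$. Then $G\leq_s F_2\leq A$ (or $B$), which proves (ii). Note that in (ii) the finiteness-free reduction \cref{l:strong_indec_subdirect} is not even needed: one can apply the argument directly to a section of $A\times B$.
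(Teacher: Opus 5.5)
Your proposal is correct and follows essentially the paper's route. Part (i) is the argument of \cref{p:mono_non-abelian}, with $C_G(N)=1$ ruling out $N\cap Y_AK/K=1$; in (ii), lifting a free pair by hand replaces the paper's appeal to projectivity of $F$, your ``no two non-trivial commuting normal subgroups'' property plays the role of the paper's ``no $\mathbb Z^2$ in a free group'', and $F_\infty\le F_2$ replaces $[F,F]$ as the countable-rank free group surjecting onto $G$ (in your abandoned detour, $\widehat F\cap Y_A=1$ makes $\pi_A$, not $\pi_B$, injective, but this does not affect your final argument).
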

\begin{proof}
    For (i), if $G$ is contains a unique non-abelian minimal normal subgroup $N$ with $C_G(N)=1$, then we can simply mimic the proof of \cref{p:mono_non-abelian}.

    For (ii), first let $F$ be a non-abelian free group and suppose $F \leq_s A \times B$. Since $F$ is projective, it follows that $F$ is a subgroup of $A \times B$, thus we may assume that $A$ and $B$ are quotients of $F$. Consider the projections $\pi_1$ and $\pi_2$ of $F$ onto $A$ and $B$, respectively. If $\ker \pi_1$ and $\ker \pi_2$ were both non-trivial, this would mean one can find two commuting elements in $F$ that generate a free abelian group of rank 2, a contradiction. So either $A$ or $B$ is isomorphic to $F$.

    Second, suppose a countable group $G$ contains a non-abelian free subgroup $F$ and suppose $G \leq_s L \times K$. This implies that $F \leq_s L \times K$, so by the previous paragraph we may assume without loss of generality that $F \leq L$. The group $[F,F]$ is free on countably many generators, so it admits $G$ as an epimorphic image. It then follows that $G \leq_s L$.
\end{proof}

For instance, it follows that Thompson's groups $F$, $T$ and $V$ are sectionally indecomposable. Indeed, $T$ and $V$ are simple, and $[F,F]$ is a simple, normal subgroup of $F$ which is contained in every other non-trivial normal subgroup of $F$ and such that $C_F([F,F])=1$.

In contrast with the finite case, where the only sectionally indecomposable abelian groups are the cyclic ones, there do exist infinite abelian groups which are sectionally indecomposable. Let $\mathbb Z_{p^\infty}$ be the $p$-Pr{\"u}fer group for a prime $p$.

\begin{proposition}\label{p:prufer}
    $\mathbb Z_{p^\infty}$ is sectionally indecomposable.
\end{proposition}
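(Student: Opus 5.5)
We prove that $P=\mathbb Z_{p^\infty}$ is sectionally indecomposable by reducing, as in \cref{l:strong_indec_subdirect}, to a subdirect product $X\leq A\times B$ with an epimorphism $X\to P$. The definition makes sense for arbitrary groups and that lemma's proof did not use finiteness. Let $K=\ker(X\to P)$, let $\pi_A,\pi_B$ be the projections, and let $Y_A=\ker\pi_A$, $Y_B=\ker\pi_B$, so that $Y_A\cap Y_B=1$ and $[Y_A,Y_B]=1$. The strategy is to use the special structure of $P$: every proper subgroup of $P$ is finite cyclic, and every proper quotient of $P$ by a finite subgroup is again isomorphic to $P$. Write $\phi\colon X\to P$ for the epimorphism.

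First, look at the images $\phi(Y_A)$ and $\phi(Y_B)$, which are normal subgroups of $P$. If $\phi(Y_A)$ is finite, say $\phi(Y_A)=Q$, then $P\cong P/Q\cong X/Y_AK$. Since $X/Y_A\cong A$, this gives $P$ as a quotient of $\pi_A(X)=A$, and so $P\leq_s A$. The same argument applies to $B$. So we may assume that $\phi(Y_A)=\phi(Y_B)=P$, since the only infinite subgroup of $P$ is $P$ itself.

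In that case, the key step is to produce $P$ inside one of $A$ or $B$ directly. Restrict $\phi$ to $Y_B$. It surjects onto $P$, and $Y_B$ embeds in $A$ via $\pi_A$, because $Y_B\cap Y_A=1$. Hence $P$ is a quotient of the subgroup $\pi_A(Y_B)\cong Y_B$ of $A$, so $P\leq_s A$. In fact, this argument works as soon as either image is all of $P$, so the two cases above exhaust everything.

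The main obstacle is just the bookkeeping in the first case. There I must check that $X/Y_AK\cong P/\phi(Y_A)$, which holds because $\phi$ induces $X/K\cong P$ and $Y_AK/K\mapsto\phi(Y_A)$. I must also recall that $P/Q\cong P$ for every finite $Q$. That follows because $Q=\mathbb Z/p^m$ and multiplication by $p^m$ is a surjective endomorphism of $P$ with kernel $Q$. With these facts in hand, both cases give $P\leq_s A$ or $P\leq_s B$, as required.
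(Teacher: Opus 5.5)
Your proof is correct and is essentially the paper's argument. The paper takes $N=K\ker\rho$, with $\rho$ the first projection, and uses that $H/N$, being a quotient of $\mathbb Z_{p^\infty}$, is either trivial or isomorphic to $\mathbb Z_{p^\infty}$; this is exactly your dichotomy between $\phi(Y_A)$ finite and $\phi(Y_A)=P$. Your reduction to subdirect products and your third case (both images equal to $P$) are harmless but unnecessary, since the dichotomy for $Y_A$ alone already gives $P\leq_s A$ or $P\leq_s B$.
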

\begin{proof}
For (i), assume $\mathbb Z_{p^\infty}\leq_s Y_1\times Y_2$.  So there exist $K\unlhd H\leq Y_1\times Y_2$ with $H/K\cong \mathbb Z_{p^\infty}$. Let $\rho$
be the restriction to $H$ of the projection $\pi_1: Y_1\times Y_2\to Y_1.$ Consider the normal subgroup $N=K\ker \rho$ of $H$. The factor group $H/N$ is an epimorphic image of $H/K\cong \mathbb Z_{p^\infty}$ so we have only two possibilities:
\begin{enumerate} 
\item $H/N\cong \mathbb Z_{p^\infty}.$ In this case $\mathbb Z_{p^\infty}$ is an epimorphic images of $H/\ker\rho \leq Y_1,$ so $\mathbb Z_{p^\infty}\leq_s Y_1.$
\item $H=N.$ In this case $H/K=K\ker\rho/K \cong \ker\rho/(K\cap\ker\rho).$ Since $\ker\rho\leq Y_2,$ we conclude $\mathbb Z_{p^\infty}\leq_s Y_2.$\qedhere
\end{enumerate}
\end{proof}

As far as divisible groups go, the Pr{\"u}fer groups are the only sectionally indecomposable ones. For a directly indecomposable divisible group $A$, if it is not isomorphic to the a Pr{\"u}fer group, it is isomorphic to $\mathbb Q$, which is an epimorphic image of $\mathbb Z_{(2)} \times \mathbb Z [1/2]$ via the map $(x,y) \mapsto x+y$, where the first factor is the additive group of the localisation of $\mathbb Z$ on the prime ideal $(2)$ and the second factor is the additive group of the subring of $\mathbb Q$ generated by $1/2$. More generally, if $\pi$ is any set of primes with $|\pi| \geq 2$, a similar argument shows that the additive subgroup of the subring of $\mathbb Q$ generated by $1/p$ for $p \in \pi$ is not sectionally indecomposable. However, when $|\pi|=1$, we do not know if sectional indecomposability holds or not.

We also provide virtually abelian examples. Let $D_{p^\infty}$ denote the semidirect product $\mathbb Z_{p^\infty} \rtimes C_2$, where $C_2$ acts by inversion.

\begin{proposition}
    $D_{p^\infty}$ is sectionally indecomposable if and only if $p$ is odd.
\end{proposition}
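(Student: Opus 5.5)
The plan is to treat the two directions separately, reusing for odd $p$ the kernel argument from \cref{p:prufer} and for $p=2$ a diagonal-type construction in the spirit of \cref{l:H2} and of the $Q_{2^n}$ example in \cref{pgruppi}.

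First I would record the normal structure of $D=D_{p^\infty}=P\rtimes\langle y\rangle$, where $P=\mathbb Z_{p^\infty}$. A normal subgroup containing an element $xy$ with $x\in P$ also contains $[xy,P]=P^2=P$, so it is all of $D$. Hence every proper normal subgroup lies in $P$, and is therefore either a finite cyclic $p$-group or $P$ itself. Consequently every quotient of $D$ is isomorphic to $D$, to $C_2$, or to $1$, since $D/Z\cong D$ for any finite $Z\le P$. A non-trivial quotient is of the form $D$ or $C_2$, and it is isomorphic to $D$ exactly when it is infinite.

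For odd $p$, I would mimic \cref{p:prufer}. Suppose $K\unlhd H\le Y_1\times Y_2$ with $H/K\cong D$, and let $\rho_i$ be the restriction to $H$ of the projection onto $Y_i$. Put $N_i=K\ker\rho_i$. If some $H/N_i$ is infinite, then it is isomorphic to $D$ and is a quotient of $H/\ker\rho_i\le Y_i$, so we are done. Otherwise each $N_i/K$ has index at most $2$ in $H/K$, so it contains the image $P$ of the Prüfer part.
\begin{itemize}
\item If $N_i=H$ for some $i$, then $D\cong \ker\rho_i/(K\cap\ker\rho_i)$, and $\ker\rho_i$ embeds in $Y_{3-i}$.
\item Otherwise $N_1/K=N_2/K=P$. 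Since $\ker\rho_1$ and $\ker\rho_2$ commute and meet trivially, I would exploit the fact that $y$ inverts $P$ and that inversion has no nontrivial fixed points on $P$ when $p$ is odd.
\end{itemize}
In the second case, choose $h\in H$ mapping onto $y$ and write $h$ through the projections. The aim is then to show that $H/(K\ker\rho_1\cap K\ker\rho_2)$ forces $P$ to be centralized by $h$ modulo $K$, which is a contradiction. The cleanest version works with $H/K$ directly: the images of $\ker\rho_1$ and $\ker\rho_2$ are two subgroups of $P$ that each generate $P$ together with $K$ and commute elementwise with each other. I would then try to show that their product, together with $h$, yields an infinite abelian section that inversion must centralize. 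I expect this last case to be the main obstacle, and the precise place where oddness of $p$ enters. The intended mechanism is that when $p$ is odd, a $2$-element normalizing $P$ by inversion cannot be "split" across the two factors, because $-1\ne 1$ on every nontrivial subgroup of $P$.

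For $p=2$, I would imitate the $Q_{2^n}$ construction of \cref{pgruppi}. Let $A=P\rtimes\langle t\rangle$ with $t$ of order $4$ acting by inversion, so that $t^2$ is central, and let $B=\langle s\rangle\cong C_4$. In $A\times B$, take $U=\langle P\times 1,(t,s)\rangle$ and the central subgroup $K=\langle(t^2,s^2)\rangle$. Then $U/K\cong P\rtimes C_2$ with inversion, that is, $U/K\cong D_{2^\infty}$. Neither $B$ nor $A$ should admit $D$ as a section, and the delicate point is $A$, since $A/\langle t^2\rangle\cong D$ already. So I would modify $A$ by identifying $t^2$ with a suitable element, or by replacing $C_4$ with a larger cyclic $2$-group so that every involution of every infinite subgroup of every quotient centralizes the Prüfer part. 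Choosing this modification correctly is the second obstacle. The check afterwards would use the normal-subgroup description above: any section of $A$ isomorphic to $D$ would have to come from an infinite subgroup containing $P$, modulo a finite normal subgroup, and the chosen modification should rule that out.
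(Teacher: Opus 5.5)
\textbf{The odd-$p$ case.} Your description of the quotients of $D=D_{p^\infty}$ (only $1$, $C_2$ and $D$) is correct. The odd-$p$ argument, however, has a gap exactly in the case you flag, and the strategy you propose there cannot succeed, because the case $N_1/K=N_2/K=P$ is not contradictory. For instance, take $Y_1=Y_2=D$, $H=(P\times P)\langle (y,y)\rangle$ and $K=\{(a,a): a\in P\}$. Then $H/K\cong D$ and $K\ker\rho_1=K\ker\rho_2=P\times P$.

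So this case must be closed by \emph{constructing} $D$ as a section of one factor. This is what the paper does, using a single projection $\rho=\rho_1$:
\begin{enumerate}
\item Let $N=K\ker\rho$, pick $h=(h_1,h_2)\in H\setminus N$, and regard $\ker\rho=H\cap(1\times Y_2)$ as a subgroup of $Y_2$.
\item The isomorphism $N/K\cong \ker\rho/(K\cap\ker\rho)$ is $H$-equivariant, and $h$ acts on $\ker\rho$ exactly as $h_2$ does. Hence $h_2$ normalizes $\ker\rho$ and $K\cap\ker\rho$, and inverts $E=\ker\rho/(K\cap\ker\rho)\cong\mathbb Z_{p^\infty}$.
\item Thus $\overline X=\langle h_2,\ker\rho\rangle/(K\cap\ker\rho)=E\langle x\rangle$ with $x$ inverting $E$.
\item The subgroup $Z=\langle x^2\rangle$ is central in $\overline X$. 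Its intersection $E\cap Z$ has exponent at most $2$, since its elements are both fixed and inverted by $x$. Hence $EZ/Z\cong\mathbb Z_{p^\infty}$.
\item Moreover $x\notin EZ$: otherwise some odd power of $x$ would lie in $E$, yet it inverts $E$, which has elements of order greater than $2$.
\item Therefore $\overline X/Z\cong D_{p^\infty}$, and $D\leq_s Y_2$.
\end{enumerate}

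\textbf{The case $p=2$.} Nothing in the argument above uses that $p$ is odd; for $p=2$ one merely has $|E\cap Z|\leq 2$. Consequently $D_{2^\infty}$ is also sectionally indecomposable, and the $p=2$ half of your plan cannot be completed by any modification of $P\rtimes\langle t\rangle$. Indeed, any group with subgroups $C\unlhd B$, $B/C\cong\mathbb Z_{2^\infty}$, and an element normalizing $B$ and $C$ and inverting $B/C$, already has $D_{2^\infty}$ as a section. You saw the first instance of this yourself with $P\rtimes\langle t\rangle$ modulo $\langle t^2\rangle$.

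The paper's own $p=2$ argument has the same defect. In $Q_{2^\infty}$ the square $x^2$ of the inverting element is central, and $Q_{2^\infty}/\langle x^2\rangle\cong D_{2^\infty}$. So $D_{2^\infty}$ is already a section of $Q_{2^\infty}$, unlike the finite situation, where $|D_{2^n}|=|Q_{2^n}|$ prevents this. What can actually be proved is that $D_{p^\infty}$ is sectionally indecomposable for every prime $p$; the ``only if'' direction of the statement is false.
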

\begin{proof}
First, assume that $p$ is odd and $D_{p^\infty}\leq_s Y_1\times Y_2$. As such, there exist $K\unlhd H\leq Y_1\times Y_2$ with $H/K\cong D_{p^\infty}$. We use the same notation as in the proof of \cref{p:prufer} and we recall that $\ker \rho = H \cap Y_2$. We have three possibilities for the factor group $H/N$: $H=N,$ $H/N \cong D_{p^\infty},$ $H/N \cong C_2.$ In the first two cases we can conclude, as in \cref{p:prufer}, that either $D_{p^\infty}\leq_s Y_1$ or  $D_{p^\infty}\leq_s Y_2.$ Assume $H/N\cong C_2.$
In this case $N/K \cong \mathbb Z_{p^\infty}$ and
there exists $h=(y_1,y_2)\in H$ such that $n^h\equiv n^{-1}\pmod K$ for every $n\in N.$ Since $N/K = K\ker\rho/K\cong_H \ker\rho/(K\cap \ker\rho)$ the element $y_2$ acts on $\ker \rho$ as the inversion modulo $K\cap \ker \rho.$ Consider $X=\langle y_2,\ker\rho\rangle\leq Y_2$, let $\overline X=X/(K\cap \ker \rho)$ 
and $x=y_2(K\cap \ker \rho)\in \overline X.$ Notice that $\overline X=A\langle x\rangle,$ where $A=\ker\rho/(K\cap \ker \rho) \cong \mathbb Z_{p^\infty}$
and $a^x=-a$ for every $a\in A.$ Let $Z=\langle x^2\rangle.$ If $Z$ is infinite, then $Z \cap A=1,$
and $X/Z\cong D_{p^\infty}.$ Otherwise $|Z\cap A|=p^t$ for some $t\in \mathbb N$ and $|x|=2p^t$, so $X=A\langle x^{p^t}\rangle\cong D_{p^\infty}.$

If $p=2,$ we may consider the direct limit $Q_{2^\infty}$ of the direct system of the quaternion groups $Q_{2^n}$. Indeed $Q_{2^\infty}=A\langle x\rangle,$ where $A\cong \mathbb Z_{2^\infty},$
$a^x=-a$ for every $a\in A$ and $x^2$ coincides with the unique element of order 2 in $A.$ By the same argument used to see that $D_{2^n}\leq_s Q_{2^n}\times Q_{2^n},$  we can deduce that $D_{2^\infty}\leq_s Q_{2^\infty}\times Q_{2^\infty}.$
\end{proof}

A last remark on the non-abelian case is that there are many interesting test cases which are not covered by any of the above results. For instance, periodic branch groups, which are far from being simple since they are residually finite and do not contain non-abelian free subgroups. However, it is not clear if there should be any sort of general criterion to prove or disprove sectional indecomposability for them, so we do not explore this direction in this work.

\bibliographystyle{abbrv}
\bibliography{References}

\end{document}